\documentclass[10pt]{amsart}
\usepackage{newlfont,amsfonts,amssymb,amsmath,amsthm,amsgen,amscd,enumerate,datetime}

\usepackage{euscript,color}

\setlength{\textwidth}{15cm}
\setlength{\textheight}{21.5cm}
\setlength{\oddsidemargin}{1cm}
\setlength{\evensidemargin}{1cm}
\setlength{\footskip}{1cm}
\setlength{\marginparsep}{2mm}
\setlength{\marginparwidth}{3cm}

\setlength{\topmargin}{0mm}
\setlength{\headsep}{8mm}

\newcommand{\der}{{ d}}

\newcommand{\QQ}{\ensuremath{\mathbb{Q}}}
\newcommand{\N}{\ensuremath{\mathbb{N}}}
\newcommand{\Z}{\ensuremath{\mathbb{Z}}}
\newcommand{\C}{\ensuremath{\mathbb{C}}}

\renewcommand{\O}{\ensuremath{\mathrm{O}}}
\newcommand{\R}{\ensuremath{\mathbb{R}}}
\newcommand{\Nor}{\ensuremath{\mathrm{Nor}}}
\newcommand{\Stab}{\ensuremath{\mathrm{Stab}}}
\newcommand{\pr}{\ensuremath{\mathrm{pr}}}
\newcommand{\U}{\ensuremath{\mathrm{U}}}
\newcommand{\G}{\ensuremath{\mathrm{G}}}
\newcommand{\Spin}{\ensuremath{\mathrm{Spin}}}
\newcommand{\SO}{\mathrm{SO}}
\newcommand{\SU}{\mathrm{SU}}
\newcommand{\Sp}{\mathrm{Sp}}
\newcommand{\1}{\mathbf{1}}

\newcommand{\e}{\mathrm{e}}
\renewcommand{\d}{{d}}
\newcommand{\bcase}{\begin{case}}
\newcommand{\ecase}{\end{case}}

\newcommand{\bclaim}{\begin{claim}}
\newcommand{\eclaim}{\end{claim}}

\newcommand{\bstep}{\begin{step}}
\newcommand{\estep}{\end{step}}

\newcommand{\bhlem}{\begin{hlem}}
\newcommand{\ehlem}{\end{hlem}}

\newcommand{\bleer}{\begin{leer}}
\newcommand{\eleer}{\end{leer}}
\newcommand{\bde}{\begin{de}}
\newcommand{\ede}{\end{de}}
\newcommand{\ol}{\overline}

\newcommand{\mf}{\mathfrak}
\newcommand{\bs}{\begin{satz}}
\newcommand{\es}{\end{satz}}
\newcommand{\btheo}{\begin{theo}}
\newcommand{\etheo}{\end{theo}}
\newcommand{\bfolg}{\begin{folg}}
\newcommand{\efolg}{\end{folg}}
\newcommand{\blem}{\begin{lem}}
\newcommand{\elem}{\end{lem}}
\newcommand{\bnote}{\begin{note}}
\newcommand{\enote}{\end{note}}
\newcommand{\bprf}{\begin{proof}}
\newcommand{\eprf}{\end{proof}}
\newcommand{\bd}{\begin{displaymath}}
\newcommand{\ed}{\end{displaymath}}
\newcommand{\be}{\begin{eqnarray*}}
\newcommand{\ee}{\end{eqnarray*}}
\newcommand{\eeqa}{\end{eqnarray}}
\newcommand{\beqa}{\begin{eqnarray}}
\newcommand{\bi}{\begin{itemize}}
\newcommand{\ei}{\end{itemize}}
\newcommand{\bnum}{\begin{enumerate}}
\newcommand{\enum}{\end{enumerate}}
\newcommand{\la}{\langle}
\newcommand{\ra}{\rangle}

\newcommand{\sums}{\sum\limits}

\newcommand{\beq}{\begin{equation}}
\newcommand{\eeq}{\end{equation}}
\newcommand{\einhalb}{\frac{1}{2}}
\newcommand{\rr}{\mathbb{R}}

\newcommand{\vf}{\varphi}
\newcommand{\earr}{\end{array}\]}
\newcommand{\barr}{\[\begin{array}}
\newcommand{\bvec}{\left(\begin{array}{c}}
\newcommand{\evec}{\end{array}\right)}

\newcommand{\lag}{\mathfrak{g}}
\newcommand{\g}{\mathfrak{g}}

\newcommand{\h}{\mathfrak{h}}
\newcommand{\lah}{\mathfrak{h}}

\newcommand{\hol}{\mathfrak{hol}}
\newcommand{\Hol}{\mathrm{Hol}}
\newcommand{\+}{\oplus}

\newcommand{\rrn}{\mathbb{R}^n}
\newcommand{\so}{\mathfrak{so}}

\newcommand{\s}{\sigma}
\newcommand{\del}{\partial}

\newcommand{\bbR}{\mathbb{R}}

\newcommand{\bbem}{\begin{bem}}
\newcommand{\ebem}{\end{bem}}
\newcommand{\bbez}{\begin{bez}}
\newcommand{\ebez}{\end{bez}}
\newcommand{\bbsp}{\begin{bsp}}
\newcommand{\ebsp}{\end{bsp}}


\newcommand{\D}{\Delta}

\newcommand{\wt}{\widetilde}
\newcommand{\tnab}{\widetilde{\nabla}}
\newcommand{\tem}{\widetilde{M}}
\newcommand{\tg}{\widetilde{g}}

\newcommand{\cE}{{\cal E}}

\theoremstyle{definition}
\newtheorem{de}{Definition}
\newtheorem{bem}{Remark}
\newtheorem{bez}{Notation}
\newtheorem{bsp}{Example}
\theoremstyle{plain}
\newtheorem{lem}{Lemma}
\newtheorem{satz}{Proposition}
\newtheorem{folg}{Corollary}
\newtheorem{theo}{Theorem}

\newcommand{\tM}{\widetilde{M}}
\newcommand{\lu}{\underline{\lambda}}

\newcommand{\cL}{{\cal L}}
\newcommand{\cS}{{\cal S}}


\begin{document}

\bibliographystyle{abbrv}


\title{On the full holonomy group of special Lorentzian manifolds}
\author{Helga Baum}
\author{Kordian L\"arz} \address[Baum \& L\"arz]{Humboldt-Universit\"{a}t Berlin,
Institut f\"{u}r Mathematik, Rudower Chaussee 25, 12489 Berlin, Germany}\email{baum@math.hu-berlin.de \& laerz@math.hu-berlin.de}
\author{Thomas Leistner}\address[Leistner]{School of Mathematical Sciences, University of Adelaide, SA 5005,
Australia} \email{thomas.leistner@adelaide.edu.au}
\thanks{This work was supported by the Group of Eight Australia and the German Academic Exchange
Service through the Go8 Germany Joint Research Co-operation Scheme
grant ``Spinor field equations in global Lorentzian geometry''. The
third author acknowledges support from the Australian Research
Council via the grant FT110100429, the first one from the DFG-CRC
647 ``Space-Time-Matter''.}
\subjclass{Primary  53C29; Secondary 53C50,  53C27}
 \begin{abstract}
 We study the full holonomy group of Lorentzian manifolds with a parallel null line bundle.
 We prove several results that are based
 on the classification of the restricted holonomy groups of such manifolds and provide a
 construction method for manifolds with disconnected holonomy which starts from a Riemannian manifold and
 a properly discontinuous group of isometries.
Most of our examples are quotients of
 pp-waves
with disconnected holonomy and without parallel vector field.
Furthermore,  we classify the full holonomy groups of solvable Lorentzian symmetric spaces and
of Lorentzian manifolds with a parallel null spinor.
Finally, we construct  examples of globally hyperbolic manifolds with
complete spacelike Cauchy hypersurfaces, disconnected full holonomy
and a parallel spinor.
\\[2mm]
{\em Keywords:} Lorentzian manifolds, holonomy groups, isometry groups, parallel spinor fields, globally hyperbolic manifolds, pp-waves
\end{abstract}
\setcounter{tocdepth}{1}

\maketitle

\tableofcontents

\section{Introduction}

The aim of this paper is to
study the holonomy group of Lorentzian
manifolds with a parallel bundle of null lines. The holonomy group
of a semi-Riemannian manifold $(M,g)$ at a point $p\in M$  is given
as the group of parallel transports along loops\footnote{All curves we consider
are piecewise smooth.} based at $p$,
\begin{equation}
\label{holdef} \Hol_p(M,g):= \{ P_\gamma:T_pM\to T_pM\mid
\gamma:[0,1]\to M \text{ a  curve {} with } \gamma(0)=\gamma(1)=p\}.
\end{equation}
Here $P_\gamma$ denotes the parallel transport along $\gamma$ with
respect to the Levi-Civita  connection $\nabla^g$ of $g$. The holonomy group is a subgroup
of the orthogonal group $\O(T_pM,g_p)$, where $T_pM$ is the tangent space of $M$ at $p$ and $g_p$
the scalar product induced by the semi-Riemannian metric $g$ on $T_pM$.

Holonomy groups are not necessarily closed nor connected. The
connected component $\Hol^0_p(M,g)$ is given by restricting the
definition \eqref{holdef} to curves that can be  contracted to  the
point $p$. Indeed, by contracting the loop $\gamma$ we obtain a path
in the  holonomy group from $P_\gamma$ to the identity. Hence,
holonomy groups of simply connected manifolds are connected. The
{\em restricted} holonomy group $\Hol_p^0(M,g)$ is a normal subgroup
in the {\em full} holonomy group $\Hol_{p}(M,g)$. Moreover,  the
fundamental group of $M$ surjects homomorphically onto their
quotient,
\begin{equation}\label{fundamental}
\begin{array}{rcl}
    \pi_{1}(M,p) & \twoheadrightarrow & \Hol_{p}(M,g)/\Hol^{0}_{p}(M,g)\\
        \left[\gamma\right] &\mapsto &  \left[P_\gamma\right]
\end{array}\end{equation}
(for a proof see for example \cite[Chap. II, Sec. 4]{kob/nom63}).
The holonomy group is a very powerful tool, for example, for
determining parallel sections in geometric vector bundles. Knowing
the holonomy group of a given semi-Riemannian manifold allows to
find the solution to the partial differential equation for a
parallel section by solving an algebraic problem, namely to
determine the fixed vectors of the corresponding representation of
the holonomy  group. The parallel section is then obtained by
parallel transporting the algebraic object at a point to the whole
manifold and thus defining a global section. For example, for
finding a parallel vector field one has to find a fixed vector under
the holonomy group acting on the tangent space. For finding a
parallel spinor field, one has to find a spinor that is fixed under
the image of the holonomy group
 in the corresponding spinor group, or for simply connected manifolds, one that is fixed under the spin representation of the holonomy algebra.
 Another important example is the parallel complex structure of a K\"ahler manifold. Here the  the holonomy group is reduced to the unitary group.
These facts also show the importance of manifolds with special holonomy, e.g. Calabi-Yau manifolds, in string theory,
where in some situations the underlying spacetime is required to have a covariantly constant, i.e. parallel,  spinor field.
For these reasons, a classification of possible holonomy groups of semi-Riemannian manifolds is a desirable result, but out of reach in full generality.

 Classification results for holonomy groups are usually obtained
only  for the restricted holonomy group (see for example
\cite{berger55,schwachhoefer1,leistnerjdg}).  Such results are based on the
Ambrose-Singer holonomy theorem \cite{as} which states that the Lie
algebra of the holonomy group is generated by curvature at every point in $M$.  More precisely, the Lie algebra of the holonomy group at $p$  is generated as a vector space  by the following linear maps of $T_pM$
\begin{equation}
\label{ambrose}
P_\gamma^{-1}\circ R_{\gamma(1)}(X,Y)\circ P_\gamma,\end{equation}
where $\gamma:[0,1]\to M$ is a curve starting at $p$, $R_{\gamma (1)} $ the curvature tensor at $\gamma(1)$, and $X,Y\in T_{\gamma(1)}M$.

  Since the
Levi-Civita connection is torsion free, the  curvature and hence all the maps in \eqref{ambrose},  satisfy the
Bianchi identities. Hence, via the Ambrose-Singer holonomy theorem,
the Bianchi identities impose strong algebraic conditions on the Lie
algebra of the holonomy group which lead to
classification results, but only for the restricted holonomy groups,
and mostly under the assumption that it acts irreducibly, or at least indecomposably (see next paragraph for the definition).
Similar
classification results  for  full holonomy groups  are out of reach. For example,  due to the complete reducibility of the holonomy representation for Riemannian manifolds, the restricted holonomy group of a Riemannian manifold is always closed and hence compact. But  the
example given in \cite{wilking99} shows that $\Hol_{p}(M,g)$ can be
non-compact even for {\em compact} Riemannian manifolds.

For Lorentzian manifolds the classification of restricted holonomy
groups is obtained as follows: Using the splitting theorems by de
Rham \cite{derham52} and Wu \cite{wu64} one can decompose every
simply connected, complete Lorentzian manifold into a product of
Riemannian manifolds and a Lorentzian manifold, all simply
connected and complete,  and with indecomposably acting holonomy
group. By indecomposable we mean that the metric degenerates on every subspace of $T_pM$ that is invariant under the holonomy group. Of course, for Riemannian
manifolds this implies that the holonomy group acts irreducibly and
one can apply Berger's holonomy classification \cite{berger55} to
the Riemannian factors. The remaining Lorentzian factor is either
flat, irreducible or indecomposable.
On the one hand,  the irreducible case is dealt with by the Berger's list  \cite{berger55}, on which    $\SO^0(1,n-1)$ is the only possible irreducible restricted holonomy group of Lorentzian manifolds. This also follows from the more fundamental result by Di Scala and Olmos in  \cite{olmos-discala01} that $\SO^0(1,n-1)$ has no proper irreducible subgroups.
On the other hand,  the classification in the  indecomposable, non-irreducible case was achieved recently  by Berard-Bergery and Ikemakhen \cite{bb-ike93},  the third author \cite{leistnerjdg}, and Galaev \cite{galaev05}. We will explain the classification in the following paragraph and in Section \ref{alg-sec}.

We say that a Lorentzian manifold $(M,g)$ of dimension $(n+2)$ {\em has  special holonomy}, or simply is {\em special} if its restricted holonomy group acts indecomposably but is not equal to $\SO^0(1,n+1)$, the connected component of the special orthogonal group in Lorentzian signature\footnote{This is in accordance with the terminology in the Riemannian setting where {\em special holonomy} usually refers to manifolds with restricted holonomy different from $\SO(n)$ but still acting irreducibly.  Of course, in Riemannian signature  irreducibility is the same as indecomposability, but in other signatures indecomposability is the property that is geometrically more important: If the restricted holonomy group acts decomposably, i.e. with a non-degenerate invariant subspace, then, without further assumptions on $M$,  the manifold is locally a semi-Riemannian product \cite[Proposition 3]{wu64}.}.
As $\SO^0(1,n+1)$ has no proper irreducible subgroups,
  this means that the representation of the restricted holonomy group of a special Lorentzian manifold cannot be irreducible but that
 the metric is degenerate on all invariant subspaces. Hence, there is a $\Hol^{0}_{p}(M,g)$-invariant degenerate
 subspace $W \subset T_{p}M$ which defines an invariant null line $L:=W\cap W^\bot$ and the restricted holonomy
 group is contained in the stabiliser in $\SO^0(T_pM,g_p)$ of this line $L$.
Identifying $T_pM$ with $\rr^{1,n+1}$ by fixing a basis  $(\ell ,e_{1},\ldots,e_{n}, \ell^*)$ in
$T_pM$ such that $\ell\in L$ and the metric at $p$ is of the form
\begin{equation}\label{metric}
\begin{pmatrix}
0&0&1\\0&\1_n & 0 \\1&0&0\end{pmatrix},
\end{equation}
this stabiliser in $\O(1,n+1)$ can be written as the parabolic subgroup
\be
P& :=&  \mbox{Stab}_{\O(1,n+1)}(L) \ =\ (\rr^*\times \O(n))\ltimes \rrn
    \\[2mm]
    &=&
         \left\{ \begin{pmatrix}
                                    a & x^t &
                                    -\frac{1}{2}a^{-1}x^{t}x \\[0.1cm]
                                    0 & A & -a^{-1}Ax \\[0.1cm]
                                    0 & 0 & a^{-1}
                                \end{pmatrix} \;\;
                                \Big| \;\; a \in \bbR^{*},\ A \in \O(n),
                                \ x \in \bbR^{n} \right\},
\ee
whose connected component is given by the stabiliser
of $L$ in $\SO^0(1,n+1)$, i.e.,
\[P^0\ =\   \mbox{Stab}_{\SO^0(1,n+1)}(L)=(\rr^+\times \SO(n))\ltimes \rrn.\]
This defines three projections of $P$  onto $\rr^*$, $\O(n)$ and
$\rr^n$, and of $P^0$ onto $\rr^+$, $\SO(n)$ and $\rr^n$ which we
denote by \be \pr_\rr:P\to \rr^*, & \;\;  \pr_{\O(n)}:P\to \O(n), &
\;\;  \pr_{\rrn}:P\to \rrn.\ee


For Lorentzian manifolds with restricted holonomy group $H^0$ acting indecomposably but not irreducibly in \cite{leistnerjdg}
it was shown that
$\pr_{\O(n)}(H^0)\subset \SO(n)$ has to be the holonomy group of a Riemannian manifold. Using
 results in \cite[see our Section \ref{alg-sec}]{bb-ike93}  this gave a full classification of restricted holonomy groups of
 Lorentzian manifolds acting indecomposably and not irreducibly.
 Galaev \cite{galaev05} then extended previous existence results and verified that indeed all groups on the list can
be realised as holonomy groups of Lorentzian manifolds. Together
with the splitting theorems by  de Rham \cite{derham52} and Wu
\cite{wu64} and the fact mentioned above that $\SO^0(1,n+1)$ has no proper
irreducible subgroups, this yields the classification of restricted
holonomy groups of Lorentzian manifolds.

Our first result about the full holonomy group is that it has the same $\rrn$-part as the restricted holonomy (see Proposition
\ref{types} for a more precise statement):
\btheo\label{theo1} Let
$(M,g)$ be a Lorentzian manifold of dimension $(n+2)>2$ such that
its restricted holonomy group $\Hol_p^0(M,g)$ acts
 indecomposably but not irreducibly. Then
\bnum
\item[1)]
the full holonomy group $\Hol_p(M,g)$ acts  indecomposably but not irreducibly, and
\item[2)]
there is a subset $\Gamma\subset   \rr^*\times \O(n)$ such that
\[ \Hol(M,g)=\Gamma\cdot \Hol^0(M,g).\]
\enum \etheo After recalling the basics on special Lorentzian
geometry and proving this result, a large part of the paper is
devoted to the construction of Lorentzian manifolds with
disconnected holonomy. Our construction uses a method to obtain
special Lorentzian manifolds of dimension $(n+2)$ from Riemannian
manifolds of dimension $n$. Using this method for every group $G$
that is a Riemannian holonomy group, connected or disconnected, we
obtain special Lorentzian manifolds with holonomy
\[
G\ltimes \R^n,\ (\R^+ \times G)\ltimes \R^n,\ (\R^* \times G)\ltimes
\R^n,\ (\Z_2\times G)\ltimes \rrn\] (see Proposition
\ref{satz-realisation-standard} for details). Further examples are
obtained as quotients of Lorentzian manifolds by a properly
discontinuous group of isometries $\Gamma$. To this end, in
Proposition~\ref{coversatz} of Section \ref{coverings} we prove a
generalisation of the fundamental formula \eqref{fundamental} for
general coverings  $\pi:\tem\to M:=\tem/\Gamma $, which provides a
surjective group homomorphism \be
\Gamma&\twoheadrightarrow&\Hol_p(M)/\Hol_{\wt p}(\tem)
\\
\s&\mapsto& \left[ P_{\gamma}\right], \ee where $\gamma $ is a loop
at $p$ that, when lifted to a curve $\wt\gamma$ starting at $\wt p$,
ends at $\s^{-1}(\wt p)$,
and yields a formula of the parallel transport in $M$ in terms of that in $\tem$.
Applying this to our context in Theorem \ref{method} and Corollary \ref{method-folge}
leads to a variety of examples of special Lorentzian manifolds with disconnected holonomy
 in Section \ref{section-pp-waves} including an example for which the quotient $\Hol/\Hol^0$ is infinitely generated.
These various examples illustrate the
possible differences between the full and the restricted holonomy group and feature a coupling between the
$\O(n)$-part and the $\rr^*$-part of the holonomy group that is not present for the restricted holonomy group.
Most of our examples are quotients of pp-waves.

One of the class of examples we consider are solvable Lorentzian symmetric spaces,
so-called Cahen-Wallach spaces \cite{Cahen-Wallach:70,Cahen-Parker:80}.
In Proposition \ref{satz-fullhol-symm}  we show that the full holonomy
of a Cahen Wallach space is either connected, in which case it is equal to
$\rrn$, or given as $
\Z_2 \ltimes \R^n$, where the $\Z_2$ factor is generated by a reflection in $\O(n)$.

In the last part of the paper we consider the full holonomy of
Lorentzian manifolds that admit a parallel spinor field.  Such a spinor
field induces a parallel vector field and hence the holonomy
stabilises  a vector, which, for indecomposable manifolds, has to be null, i.e. lightlike.
First we show in Proposition \ref{satz-Ex-paral-spin} and Corollary \ref{Folg-Number-parspinor} that,
for  a time- and space-orientable Lorentzian manifold with holonomy $G\ltimes \rrn$, the existence of
a spin structure with  parallel spinors  depends solely on $G$.
This result enables us to apply to the Lorentzian situation
 the
classification of irreducible subgroups of $\O(n)$ stabilising a
spinor and having $\SU(\frac{n}{2})$, $\Sp(\frac{n}{4})$, $\G_2$ or
$\mathrm{Spin}(7)$ as connected component. This classification was
given by McInnes  \cite{mcinnes91} and Wang \cite{wang95} and it yields our
 \btheo\label{theo2} Let $(M,g)$ be a Lorentzian spin
manifold of dimension $(n+2)>2$ with full holonomy group
$H=\Hol_p(M,g)$ with a parallel spinor. Assume that \bnum
\item[(i)] the connected component $H^0$ of the holonomy group $H$ acts indecomposably, and
\item[(ii)] $G^0:=\pr_{\O(n)}(H^0)$ acts irreducibly on $\rrn$.
\enum
Then $H=
G\ltimes \rr^{n}$,
where $G\subset \SO(n)$
is one of the groups listed in Theorem \ref{wangtheo} and the dimension of parallel spinors on $(M,g)$
is equal to the dimension  $N$ of spinors fixed under $G$ as given in Theorem \ref{wangtheo}.
\etheo Finally, we study the existence problem for metrics with these
holonomy groups and parallel spinors. We use a method developed in
\cite{baum-mueller08} to construct globally hyperbolic Lorentzian
manifolds with complete spacelike Cauchy hypersurfaces,  parallel
spinors and holonomy $G\ltimes\rr^n$ from Riemannian manifolds. We
apply this method to examples given by Moroianu and Semmelmann in
\cite{moroianu-semmelmann00} and obtain globally hyperbolic metrics
with parallel spinors for the groups in Theorem \ref{theo2}.

\section{Algebraic preliminaries}
\label{alg-sec}

Let $(M,g)$ be a Lorentzian manifold of signature $(1,n+1)$. The
holonomy group as defined in \eqref{holdef} is an immersed Lie
subgroup of $\O(T_pM,g_p)$ (for a proof see  \cite[Thm. II.4.2]{kob/nom63}). We
denote its Lie algebra  by $\hol_p(M,g)$. For connected manifolds,
holonomy groups at different points are conjugated in $\O(1,n+1)$ to
each other. We assume from now on that all manifolds are connected.
Hence we may omit the point $p$ and consider holonomy groups only up
to conjugation.

We will first derive some purely algebraic results which will imply
Theorem \ref{theo1} of the introduction. Let $\rr^{1,n+1}$ be the
$(n+2)$-dimensional Minkowski space, in which we fix a basis
  $(\ell,e_{1},\ldots,e_{n},\ell^*)$ such that  the Minkowski inner product is of the form \eqref{metric}.
Let $L$ be the null line spanned by $\ell$. Furthermore let
$H\subset \O(1,n+1)$ be a subgroup and $H^0$ a normal subgroup of
$H$. Obviously,  $H$ is contained in the normaliser in $\O(1,n+1)$
of $H^0$,
\[H\subset \Nor_{\O(1,n+1)}(H^0).\]
In this situation we prove:

\blem\label{lem1} Let $H^0\subset \O(1,n+1)$ be a subgroup that acts
indecomposably and stabilises the null line $L$. Then the normaliser
of $H^0$ stabilises $L$ as well, i.e.
\[ \Nor_{\O(1,n+1)}(H^0)\subset \Stab_{\O(1,n+1)}(L). \]
In particular, if $H\subset \O(1,n+1)$ is an immersed Lie group and
$H^0$ is the connected component of $H$, then $H$ stabilises $L$ and
acts indecomposably if $H^0$ does. \elem \bprf Let $g\in
\Nor_{\O(1,n+1)}(H^0)$ and $L=\rr\cdot \ell$. Then for each $h\in
H^0$ we have that also $\hat h:=ghg^{-1}\in H^0$. Since $H^0$
stabilises $L$ there are $\hat{\lambda}$  such that
 $\hat{h}(\ell)=\hat \lambda \ell$. Multiplying this with $g^{-1}$ gives $hg^{-1}(\ell)=\hat\lambda g^{-1}(\ell)$.
 Hence, $g^{-1}(\ell)$ spans a null line that is fixed under all of $H^0$. But since $H^0$ was assumed to be indecomposable,
 $L$ is the only line that is fixed by $H^0$. Hence, $g^{-1}(\ell)\in L$.
\eprf
\bbem\label{bem1} We should remark that we can have immersed
subgroups that fix $L$ and acting  indecomposably, but the connected
component  $H^0$ acts decomposably. An example of this is given in
Section \ref{section-pp-waves}. \ebem

From now on let $H\subset \O(1,n+1)$ be an immersed subgroup with connected component
\[H^0\subset P^0= \mathrm{Stab}_{\SO^0(1,n+1)}(L)=(\rr^+\times \SO(n))\ltimes \rr^n\] in the
stabiliser of a null line $L$.  Lemma \ref{lem1} ensures that
\[
H\subset P= \Stab_{\O(1,n+1)}(L)=(\rr^*\times \O(n))\ltimes \rr^n\] and we can define
\[G:=\pr_{\O(n)}(H).\]
If $G^0$ denotes the connected component of $G$ we have
\[G^0=\pr_{\O(n)}(H^0)=\pr_{\SO(n)}(H^0).\]
We denote by $\h\subset \so (1,n+1)$ the Lie algebra of $H^0$ and
recall the classification of subalgebras of $\so(1,n+1)$ that act
indecomposably but not irreducibly given in \cite{bb-ike93}.
 If $\lah$ is such a subalgebra, then it is contained in the Lie algebra of the stabiliser $\mf p$ of
 the null line $L$, i.e. $\h \subset \mf p:=(\rr \+\so(n))\ltimes \rrn$. We will write elements in
 $\mf p$ as triple $(a,X,v)\in \rr\times \so(n)\times \rrn$.  Denote by $\g$ the projection of $\h$
 onto $\so(n)$. Since $\g$ is reductive, it decomposes into its centre $\mf{z}$ and its derived Lie
 algebra $\g'$, i.e. $\g=\mf{z}\+\g'$. Then it was proven in \cite{bb-ike93} that, if $\h$ acts
 indecomposable, it is of one of the following types, the first two being {\em uncoupled} and the last two {\em coupled}:
\begin{description}
\item[Type 1] $\h=(\rr\+\g)\ltimes \rrn$,
\item[Type 2]  $\h=\g\ltimes \rrn$,
\item[Type 3]
There exists an epimorphism $\varphi: {\mathfrak z} \rightarrow \rr $, such that
${\mathfrak h} = \left( {\mathfrak f} \oplus \mf{g}' \right)\ltimes \rrn,$
where ${\mathfrak f}:= \mathrm{graph}\ \varphi = \{ ( \varphi(Z),Z) | Z\in {\mathfrak z} \} \subset \rr\oplus{\mathfrak z}  $. Or, written in
matrix form:
\[{\mathfrak h}= \left\{ \left.
\left(
\begin{array}{ccc}
 \varphi(Z)     &   v^t & 0\\
  0 &Z+X& -v\\
0&0& -\varphi(Z)\\
\end{array}
\right) \right| Z\in {\mathfrak z} , X\in \mf{g}', v\in \mathbb{R}^n\right\}.
\]
\item[Type 4]
 There exists
 a decomposition $\rrn= \mathbb{R}^k\oplus \mathbb{R}^{n-k}$, $0<k<n$, and
 an epimorphism $\psi: {\mathfrak z} \rightarrow \rr^k$,
such that   $ {\mathfrak h} =  \left(  {\mathfrak f} \oplus
\mf{g}'\right)\ltimes \rr^{n-k} $ where ${\mathfrak f}:= \{ \left( Z
, \psi(Z)\right) | Z\in {\mathfrak z} \} = \mathrm{graph}\ \psi
\subset\mf z\+ \rr^k $. Or, written in matrix form:
\[{\mathfrak h}= \left\{ \left.
\left(
\begin{array}{cccc}
0    & \psi(Z)^t    &   v^t & 0\\ 0    &  0   &0& -\psi(Z)\\ 0&0&Z+X&-v \\ 0&0&0&0
\end{array}
\right) \right| Z\in {\mathfrak z} , X\in \mf{g}', v\in \mathbb{R}^{n-k}
\right\}.
\]
\end{description}
Note that, since $\h$ acts indecomposably, its projection onto $\rrn$ is always all of $\rrn$,
\[\pr_{\rrn}(\h)=\rrn,\]
for all four types. However,  in the second coupled type, $\h$ does
{\em not} contain $\rrn$, only $\rr^{n-k}$. The connected Lie groups
$H^0$ corresponding to $\h$ of types 1 and 2 are given as
 \[( \rr^+\times G^0)\ltimes \rrn\  \text { or }\ G^0\ltimes \rrn.\]
 Denote by $G'^0 $ the connected subgroup in $P$ that corresponds to the derived Lie algebra $\g'$ of $\g$.
For the  coupled type 3 we have that
\[H^0=(F^0\times G'^0)\ltimes \rrn,\]
where $F^0$ is the connected Lie group corresponding to the Lie algebra
${\mathfrak f}= \{ ( \varphi(Z),Z) | Z\in {\mathfrak z} \} \subset \rr\oplus{\mathfrak z} $.
For the last coupled type the connected component of $H$ is given by
\[H^0= (F^0\times G'^0)\ltimes \rr^{n-k},\]
where $F^0$ is the connected Lie group corresponding to the Lie
algebra ${\mathfrak f}= \{ \left(  Z, \psi(Z) \right) | Z\in {\mathfrak z}
\} \subset \mf z\+ \rr^k $. In all cases we have that
$\pr_{\rrn}(H^0)=\rrn$ and $G^0:=\pr_{\SO(n)}(H^0)$  is given by the
the connected Lie subgroup in $\SO(n)$ corresponding to $\lag$.

\bs\label{Satz-full-hol}\label{types} Let $H^0\subset \SO^0(1,n+1)$
be the connected component of an immersed Lie group $H\subset
\O(1,n+1)$, and assume that $H^0$ acts indecomposably and not
irreducibly. If $G:=\pr_{\O(n)}(H)\subset \O(n)$ is the projection
of $H$ onto $\O(n)$ and $G^0=\pr_{\O(n)}(H^0)\subset \SO(n)$ its
connected component, then,  for the four different types, we have:
\begin{description}
\item[Type 1] $H=(\rr^*\times G) \ltimes \rrn$, or $H=(\rr^+\times G)\ltimes \rrn$,
\item[Type 2] $H=\hat{G} \ltimes \rrn$, where $\hat{G}\subset \rr^*\times G$ with connected component $G^0$,
\item[Type 3] There is a  subset $\Gamma\subset\Z_2\times G\subset \rr^*\times \O(n)$ such that $H=\Gamma \cdot H_0$.
\item[Type 4] There is a  subset $\Gamma\subset \rr^*\times G$ such that $H=\Gamma \cdot H_0$.
\end{description}
\es

\bprf In order to prove the statement, we show in  all four cases,
\begin{equation}\label{forall}
\text{for every $P\in H$ there is an element $Q\in H^0$ such that $P\cdot Q\in \rr^*\times \O(n)$.}
\end{equation}
For the first three types for which we have $\rrn\subset H^0\subset
H$ the statement is obvious: Here we have \be P&=&\begin{pmatrix} a
& v^t& * \\0&A&* \\ 0& 0&a^{-1}\end{pmatrix}\in H \ee
and we find
\[
Q := \exp \begin{pmatrix} 0 & - a^{-1}v^t& 0 \\0&0&a^{-1}v \\ 0&
0&0\end{pmatrix} = \begin{pmatrix} 1 & - a^{-1}v^t& * \\0&\1&* \\ 0&
0&1\end{pmatrix} \in\rr^n\subset H^0\] such that
\[P\cdot Q
=
\begin{pmatrix} a & 0& 0 \\0&A&0 \\ 0& 0&a^{-1}\end{pmatrix}
\in \rr^*\times \O(n).
\]
This implies the form of $H$ in the types 1 and 2. For type 3 we can
prove more. For an arbitrary
\[
P=\begin{pmatrix} \pm\e^{a} & v^t& * \\0&A&* \\ 0&
0&\pm\e^{-a}\end{pmatrix}\in H
\] we choose $Z\in \mf z$ such that $a+\vf(Z)=0$ and consider
\[
Q_1  =: \exp \begin{pmatrix} \vf(Z) & 0& 0 \\0&Z&0 \\
0& 0&-\vf(Z)\end{pmatrix} =\begin{pmatrix}  \e^{\vf(Z)}& 0& *
\\0&\exp(Z)&* \\ 0& 0&\e^{-\vf(Z)}\end{pmatrix} \in F^0\subset H^0\]
and
\[ Q_2 := \begin{pmatrix} 1 & \mp v^t e^{Z} & * \\
0 & \1 & * \\ 0 &  0 & 1 \end{pmatrix}  \in \R^n \subset H^0.\] Then
\[
P\cdot Q_1\cdot Q_2= \begin{pmatrix} \pm\e^{a+\varphi(Z)} & v^t e^Z& * \\0&A e^Z&* \\
0& 0&\pm\e^{-(a+\varphi(Z))}\end{pmatrix}  \begin{pmatrix} 1 &
\mp v^t e^Z& *
\\0&\1&* \\ 0& 0&1\end{pmatrix} =
\begin{pmatrix} \pm 1 & 0& * \\0& A e^Z &0 \\ 0& 0&\pm 1\end{pmatrix}
\in \Z_2\times \O(n)\,.
\]

\noindent In the case, when $\rr^n\not\subset H^0$, the Lie algebra
$\h$ of $H^0$ is of the second coupled type and we write
\[P=
\begin{pmatrix} a & u^t&v^t& * \\0&A&B&* \\0&C&D&*\\0& 0& 0&a^{-1}\end{pmatrix}\in H,
\]
with $u\in \rr^k$ and $v\in \rr^{n-k}$. Since the linear map $\psi:
\mathrm{pr}_{\so(n)}(\h) \to \rr^k$ is surjective, we find an $X\in
\so(n-k)$ such that $\psi(X)=-a^{-1}u$. Then for \be Q_1&:=&\exp
\begin{pmatrix} 0 & 0&-a^{-1}v^t& 0 \\0&0 &0&0\\0&0&0 &a^{-1}v\\0&
0& 0&0\end{pmatrix}\
=\ \begin{pmatrix} 1 & 0&-a^{-1}v& * \\0&\1 &0&* \\0&0&\1 &*\\0& 0& 0&1\end{pmatrix}\ \in\  H^0\\
Q_2&:=& \exp
\begin{pmatrix} 1& \psi(X)^t&0& 0\\0&0&0&-\psi(X) \\0&0&X&0\\0& 0& 0&0\end{pmatrix}
\ =\
\begin{pmatrix} 1& \psi(X)^t&0& * \\0&1&0&* \\0&0&\exp(X)&*\\0& 0& 0&1\end{pmatrix}
\ \in\  H^0
\ee
we obtain
\[
P \cdot Q_1 \cdot Q_2 =
\begin{pmatrix} a & u^t&0& * \\0&A&B&* \\0&C&D&*\\0& 0& 0&a^{-1}\end{pmatrix}
\cdot
Q_2
\\
=
\begin{pmatrix} a & a\psi(X)^t+u^t&0& * \\0&A&B\exp(X)&* \\0&C&D\exp(X)&*\\0& 0& 0&a^{-1}\end{pmatrix}
\in \rr^*\times \O(n),
\]
as $\psi(X)=-a^{-1}u$. This verifies \eqref{forall} also for type 4
and proves the proposition. \eprf

\noindent Note that Lemma \ref{lem1} and Proposition
\ref{Satz-full-hol} imply Theorem \ref{theo1} from the introduction
when applied to the full and the restricted holonomy group of a
Lorentzian manifold.

\section{Null line bundle and screen bundle}

In this section let $\Hol_p(M,g)$ and $\Hol^0_p(M,g)$ be the full
and the restricted holonomy group of a Lorentzian manifold $(M,g)$
of dimension $n+2>2$, and let $\nabla$ denote the Levi-Civita
connection of $g$. We assume that the restricted holonomy group acts
indecomposably and not irreducibly. Then, from Theorem \ref{theo1}
we know that the same holds true for the full holonomy group. Hence,
by the fundamental principle of holonomy by which holonomy invariant
subspaces  correspond to distributions on the manifold that are
invariant under parallel transport \cite[10.19]{besse87}, the
manifold admits a global distribution ${\cal L}$ of null lines that
is invariant under parallel transport. Of course, also the
distribution ${\cal L}^\bot$ whose fibres are orthogonal to the
fibres of ${\cal L}$ is invariant under parallel transport. Hence,
the tangent bundle is filtrated by parallel distributions
\[ {\cal L} \; \subset \; {\cal L}^\bot \; \subset \; TM.\]
The Levi-Civita connection induces a linear connection $\nabla^{\cal
L}$ on the bundle ${\cal L}$ by $\nabla^{\cal L}:=\pr_{{\cal L}}
\circ \nabla |_{{\cal L}}$, where $\pr_{{\cal L}}$ is the projection
onto $\cal L$. Moreover, the metric $g$ and the Levi-Civita
connection $\nabla$ induce a bundle metric $g^\cal S$ as well as a
covariant derivative $\nabla^{\cal S} $ on the so-called {\em screen
bundle}
\[
\mathcal{S}\ :=\  {\cal L}^\bot/{\cal L} \ \to \ M\] by $g^\cal
S([X],[Y]):=g(X,Y)$ and $\nabla^{\cal S}_X[Y]:=\left[ \nabla
_XY\right]$, where $[\: .\: ]:{\cal L}^\bot\to \cal S={\cal
L}^\bot/{\cal L}$ denotes the canonical projection. The following
Proposition shows the relation between the holonomy groups of
$(\cL,\nabla^{\cL})$ and $(\cS,\nabla^{\cS})$ and the projections of
$\Hol_p(M,g)$ onto $\R^*$ and $\O(n)$, respectively.

\begin{satz}
Let $(M,g)$ be a Lorentzian manifold with indecomposably,
non-irreducibly acting  restricted holonomy group, let ${\cal L}$ be
the corresponding distribution of null lines and
$\cS=\cL^{\bot}/\cL$ the sreen bundle on $M$. Then:
\begin{enumerate}
\item[1)] \label{eins} $\Hol_p(\cal L, \nabla^{\cal
L})=\pr_{\rr^*}(\Hol_p(M,g))$.
\item[2)] \label{zwei} The line bundle $\cL$ is orientable, i.e., $\cL$ admits a global
nowhere vanishing section if, and only if, $pr_{\bbR}(\Hol_{p}(M,g)) \subset
\bbR^+ $. This is equivalent to time-orientability of $(M,g)$.
\item[3)] \label{drei}  The connection $\nabla^{\cL}$ is flat if, and only if, $\pr_{\R^*}(\Hol^0_p(M,g))
=\{1\}$, and the line bundle $\cL$ has a global parallel section iff
$\pr_{\R^*}(\Hol_p(M,g))=\{1\}$.
\item[4)] \label{four} $Hol_p(\cS,\nabla^{\cS}) =
\pr_{\O(n)}(\Hol_{p}(M,g))$ and $Hol^0_p(\cS,\nabla^{\cS}) =
\pr_{\O(n)}(\Hol^0_{p}(M,g))$.
\end{enumerate}
\end{satz}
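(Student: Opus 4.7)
The plan is to reduce everything to an explicit matrix description of the parallel transport in an adapted frame. Choose at $p$ a basis $(\ell,e_1,\ldots,e_n,\ell^*)$ with $\ell$ spanning $\cL_p$, with $e_1,\ldots,e_n$ orthonormal and spanning a complement of $\cL_p$ in $\cL_p^\bot$, and with $\ell^*$ the unique null vector satisfying $g(\ell,\ell^*)=1$ and $g(\ell^*,e_i)=0$; the Gram matrix is then \eqref{metric}. Because the subbundles $\cL\subset\cL^\bot$ are $\nabla$-parallel, the parallel transport $P_\gamma$ along any loop $\gamma$ at $p$ preserves the filtration, lies in the parabolic subgroup $P$, and has the form
\[
P_\gamma\ =\ \begin{pmatrix} a(\gamma) & v^t & * \\ 0 & A(\gamma) & * \\ 0 & 0 & a(\gamma)^{-1}\end{pmatrix},
\qquad a(\gamma)\in\R^*,\ A(\gamma)\in\O(n),
\]
with $a(\gamma)=\pr_{\R^*}(P_\gamma)$ and $A(\gamma)=\pr_{\O(n)}(P_\gamma)$.

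For item 1, the definition $\nabla^{\cL}=\pr_{\cL}\circ\nabla|_{\cL}$ agrees with $\nabla|_{\cL}$ on sections of $\cL$ because $\cL$ is $\nabla$-parallel; hence the parallel transport on $\cL$ is the restriction of $P_\gamma$ to $\cL_p$, which is multiplication by $a(\gamma)$, proving the equality. For item 4, the induced map on $\cS_p=\cL_p^\bot/\cL_p$ is well-defined precisely because $\cL^\bot$ is $\nabla$-parallel, and in our basis it is the block $A(\gamma)$; the identities $g^\cS([X],[Y])=g(X,Y)$ and $\nabla^\cS_X[Y]=[\nabla_XY]$ then directly give the parallel transport in $\cS$, yielding both the full and (by restricting to contractible loops) the restricted versions of the equality.

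For item 2, a non-vanishing continuous section of $\cL$ is constructed by fixing $0\neq\ell\in\cL_p$ and defining $\ell_q:=P_\gamma\ell$ along a path $\gamma$ from $p$ to $q$; this is well defined up to positive multiples, and hence yields a global section, if and only if $a(\gamma)>0$ for every loop $\gamma$ at $p$, that is, $\pr_{\R^*}(\Hol_p(M,g))\subset\R^+$. Conversely, a global non-vanishing section is mapped by parallel transport to a positive multiple of itself. The equivalence with time-orientability then follows because a non-vanishing section of $\cL$ is a continuous global null vector field and therefore defines a time-orientation, while a given time-orientation, together with any auxiliary global timelike vector field $T$, singles out the connected component of $\cL\setminus\{0\}$ on which $g(T,\cdot)$ has fixed sign. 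Item 3 is an immediate consequence of item 1: the connection $\nabla^\cL$ on a connected manifold is flat iff $\Hol^0_p(\cL,\nabla^\cL)=\{1\}$, and a global parallel section of $\cL$ exists iff the full $\Hol_p(\cL,\nabla^\cL)$ is trivial.

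The only genuine subtlety I anticipate is in item 2, where one must cleanly separate three notions — existence of a nowhere-vanishing section, parallel transport by positive scalars, and time-orientability — and verify the implications without accidentally requiring the section to be parallel. All other items are direct consequences of the block form of $P_\gamma$ above.
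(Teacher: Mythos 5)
Your argument is correct and follows essentially the same route as the paper: the block-triangular form of $P_\gamma$ coming from the parallel filtration $\cL\subset\cL^\bot\subset TM$, the holonomy principle plus a patching argument for item 2, the reduction of item 3 to item 1, and the compatibility of parallel transport with the quotient projection for item 4. The only local differences are that in item 4 you obtain $\left[P^g_\gamma(e)\right]=P^{\cS}_\gamma([e])$ by projecting $\nabla^g$-parallel sections (which is indeed immediate), whereas the paper lifts $\nabla^{\cS}$-parallel sections and corrects them by an integral multiple of the parallel null field, and in item 2 the paper makes the passage from a null section to time-orientability explicit by constructing the timelike field $\tfrac{1}{\sqrt{2}}(V-Z)$ from a splitting of the screen sequence, a step you replace by the standard equivalence between nowhere-vanishing causal fields and time-orientations.
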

\begin{proof} The first statement is obvious since the parallelity
of $\cL$ implies $P^{\cL}_{\gamma}=P^g_{\gamma}|_{\cL}$ for any
curve $\gamma$, where $P_{\gamma}^{\cL}$ is the parallel transport
in $(\cL,\nabla^{\cL})$ and $P_{\gamma}^g$ that of $(M,g)$.

If $\cL$ admits a global section $V \in \Gamma(\cL)$, then
$P^{\cL}_{\gamma|_t}(V(\gamma(0))) = \alpha(t) V(\gamma(t))$ for all
curves $\gamma$, where $\alpha$ is a positive function with
$\alpha(0)=1$. Hence $\pr_{\R^*}(\Hol_p(M,g)) \subset \R^+$.
Conversally, let $\pr_{\R^*}(\Hol_p(M,g)) \subset \R^+$.
        Then using the holonomy principle,
    the half-line $\R^+\ell \subset \cL_p$ provides a well defined field of directions ${\cal L}^{+} \subset
{\cal L}$. Thus, we have a covering $M=\bigcup_{k}{U_{k}}$ and local
sections
        $V_{k} \in \Gamma(U_{k},{\cal L})$ such that $V_{k}(x) \in {\cal L}_x^{+}$ for any $x\in U_k$.
        Using a partition of unity we derive a global nowhere vanishing
        section of ${\cal L}$.
The orientablility of $\cL$ is equivalent to time-orientability of
$(M,g)$. To see this, we choose a splitting $s: S \to \cL^{\perp}$
of the sequence
        \begin{equation*}
            0 \rightarrow {\cal L} \rightarrow {\cal L}^{\perp} \rightarrow \mathcal{S} \rightarrow
            0.
        \end{equation*}
  Then $\cE:=s(\mathcal{S}) \subset \cL^{\perp}$ and $\cE^{\perp} \subset TM$ is a subbundle of signature $(1,1)$ with ${\cal L}\subset \cE^{\perp}$.
Hence, the light-cone of $\cE_p^{\perp}$ at any $p \in M$ is a union
of two
        lines, one of which is given by ${\cal L_p}$. Thus, we derive a second lightlike vector field $Z \in \Gamma(M,\cE^{\perp})$
        with $g(V,Z)=1$. Then
        $\frac{1}{\sqrt{2}}(V-Z)$ is a nowhere vanishing timelike unit vector field and $(M,g)$ is time-orientable.
On the other hand, any timelike unit vector field $T$ on $(M,g)$
defines a global field of null direction $\cL^+ \subset \cL$ by
requiring $g(T,\cL^+)>0$, hence, $\cL$ is orientable.

The third statement follows from the first one and standard facts of
holonomy theory: $\nabla^{\cL}$ is flat iff
$Hol_p^0(\cL,\nabla^{\cL}) = \{1\}$, the line bundle
$(\cL,\nabla^{\cL})$ admits a global parallel section if, and only if,
$Hol_p(\cL,\nabla^{\cL)})=\{1\}$.

Finally, we proof the fourth statement. Let $\gamma:[0,1]\to M$ be a
loop around the point $p=\gamma(0)=\gamma(1)\in M$. We fix a
complement $E\subset {\cal L}^\bot_p$ of ${\cal L}_p$ that is
orthogonal to ${\cal L}_p$. Then the canonical projection $[\: .\:
]:{\cal L}_p^\bot \to \cal S_p$ becomes a linear isomorphism when
restricted to $E$. For a fixed non-vanishing vector $\ell \in \cL_p
$ we denote by $V(t)$ the parallel displacement of $\ell$ along
$\gamma$ with respect to the Levi-Civita connection $\nabla^g$ of
$g$. We have to prove that the parallel transport $P_\gamma^g$ and
the parallel transport $P_\gamma^\cal S$ with respect to
$\nabla^\cal S$ commute with the canonical projection $[\: .\: ]$,
\begin{equation}\label{transport}
\left[P^g_\gamma(e)\right]=P_\gamma^\cal S([e]),
\end{equation}
for every $e\in E$. Indeed, if we write $P^{\cS}
_{\gamma|_{[0,t]}}([e])=\left[U(t)\right]$ with $U(t)\in {\cal
L}_{\gamma(t)}^\bot$ and  $U(0)=e$, we have
\[ 0 \equiv \nabla^\cal S_{\dot\gamma(t)}[U(t)]=  \left[ \nabla^g_{\dot\gamma(t)}U(t)\right]\]
which implies that
\[ \nabla^g_{\dot\gamma(t)}U(t) = f(t)V(t)\]
with a function $f:[0,1] \to \R$.  Since $V(t)$ is parallel, the
vector field
\[ U(t) - \int_0^t f(s)ds \,\cdot V(t)
\]
 is parallel along $\gamma$ with respect to $\nabla^g$ and equals $e$
in $t=0$. Hence,
\[
\left[P_\gamma^g(e)\right] = \left[U(1) - \int_0^1 f(s)ds\cdot V(1)
\right] = \left[ U(1) \right] =  P^\cal S_\gamma([e]).\] This proves
statement (\ref{transport}) and the proposition.
\end{proof}

 Since $G^{0}_{p}:= \pr_{SO(n)}(\Hol^0_p(M,g)) \subset \SO(\cal S_p)$, as a subgroup of
$\SO(n)$, is compact, it acts completely reducible on \[\cal
S_p=V_0\+V_1\+\ldots \+V_k \] with $V_0$ trivial and $V_i$
irreducible for $i=1, \ldots , k$,  and moreover, using the
Bianchi-identity,
 it can be shown (see \cite{bb-ike93} or \cite{leistnerjdg}) that \[G^{0}_{p}=G_1\times \ldots\times G_k\]
 is a direct product of subgroups $G_i$ acting irreducibly on $V_i$ and trivial on $V_j$ for $j\not=i$.
Furthermore,  in \cite{leistnerjdg} we have shown that $G^{0}_{p}$
acts as a Riemannian holonomy representation, i.e. $G^{0}_{p}$ is
trivial or a product of the groups from the Berger list, i.e. of
\begin{equation}\label{berger}
\SO(n),\ \mathrm{U}(m),\ \SU(m), \ \Sp(k),\ \Sp(k)\cdot \Sp(1),\
\mathrm{Spin}(7), \text{ and } \G_2,\end{equation}
 and of isotropy groups of Riemannian symmetric spaces.
 In section \ref{section-hol-par} we will use this in order to prove Theorem \ref{theo2}.

\section{Holonomy groups and coverings}\label{coverings}

In the following we will consider manifolds that are given as a
quotient by a group of diffeomorphisms. We recall the following
facts (see for example  \cite[Chapter 7]{oneill83}): Let $\Gamma$ be
a group of diffeomorphisms of a smooth manifold $M$. We say that
$\Gamma$ is {\em properly discontinuous}, if \bnum
\item[(PD1)] each $p\in M$ has a neighborhood $U$, such that $\gamma(U)\cap U=\emptyset$ for all $\gamma\in \Gamma\setminus
\{1\}$ and
\item[(PD2)] two points $p$ and $q$, which  are not in the same orbit under
$\Gamma$, have neighborhoods $U_p$ and $U_q$ such that $\gamma
(U_p)\cap U_q=\emptyset$ for all $\gamma\in \Gamma$. \enum
Clearly a
properly discontinuous group acts freely on $M$. If $\Gamma$ is a
properly discontinous group of diffeomorphisms of $M$, the quotient
space $M/\Gamma$ is a smooth manifold and the projection $\pi:M\to
M/\Gamma$ is a smooth covering map \cite[Chapter 7, Proposition
7.7]{oneill83}). If $\Gamma$ is a properly discontinuous group of
isometries of a semi-Riemannian manifold $M$, then there is a unique
metric on $M/\Gamma$, such that $\pi: M \to M/\Gamma$ is a
semi-Riemannian covering (\cite[Chapter 7, Corollary
7.12]{oneill83}).

\bbem \label{Remark-properly disc} Let $\Gamma^{\Omega}_{\lambda}$,
$\lambda \in \R \setminus \{0\}$,  be the group of diffeomorphisms
of an appropriate domain $\Omega \subset \R^2$ generated by
\[ \varphi_{\lambda}(v,u) := (e^{\lambda}v, e^{-\lambda}u), \qquad
(v ,u) \in \Omega. \] For $\Omega^0:= \R^2 \setminus \{(0,0\}$, the
group $\Gamma^{\Omega^0}_{\lambda}$ fails to satisfy (PD2) for the
points $p=(1,0)$, $q=(0,1)$. For the halfspace $\Omega:= \{(v,u) \in
\R^2 \mid u>0\}$, the group $\Gamma^{\Omega}_{\lambda}$ is properly
discontinuous whereas the groups
$\Gamma^{\Omega}_{\lambda_1,\lambda_2} \subset
\mathrm{Diff}(\Omega)$, generated by $\varphi_{\lambda_{1}}$ and
$\varphi_{\lambda_{1}}$ for $\lambda_1$, $\lambda_2$ linearly
independent over $\QQ$, fail to satisfy (PD1) since there is a
sequence of integers $(k_n,l_n) \in \Z\times \Z$ such that $0\not =
k_n\lambda_1 + l_n \lambda_2 \stackrel{n\to \infty}{\longrightarrow}
0$. \ebem

In the following section we will have to check that certain group
actions on manifolds define quotients that are manifolds again. The
following simple observation is useful. Its proof is
straightforward.

\begin{de} Let $M_1$ and $M_2$ be two smooth connected manifolds
and $\Gamma \subset \mathrm{Diff}(M_1) \times \mathrm{Diff}(M_2)$ a
group of diffeomorphisms of $M_1 \times M_2$. We denote by $\Gamma_i
:= \mathrm{proj}_i(\Gamma) \subset \mathrm{Diff}(M_i)$ the
projections and for $\sigma \in \Gamma_2$ by $\Gamma_{\sigma}
\subset \Gamma_1$ the $\sigma$-section $\,\Gamma_{\sigma} := \{
\gamma \in \Gamma_1 \mid (\gamma,\sigma) \in \Gamma \}$.  We say
that $\Gamma$ is of {\em quotient type}, if
\begin{enumerate}
\item[(i)] $\Gamma_2$ is properly discontinuous,
\item[(ii)] $\Gamma_{\mathrm{Id}_{M_2}} \subset \Gamma_1$ satisfies (PD1)
and
\item[(iii)] $\Gamma_{\sigma} \subset \Gamma_1$ satisfies (PD2) for
all $\sigma \in \Gamma_2$,
\end{enumerate}
(or if the same is true exchanging the role of $M_1$ and $M_2$).
\end{de}

\noindent Clearly, (iii) is satisfied if the section sets
$\Gamma_{\sigma} \subset \Gamma_1$ are finite for all $\sigma \in
\Gamma_2$.

 \blem\label{properlemma} Let $\Gamma \subset
\mathrm{Diff}(M_1) \times \mathrm{Diff}(M_2)$ be a group of quotient
type. Then $\Gamma$ is properly discontinuous. \elem

The fundamental relation ~\eqref{fundamental} between the
fundamental group and the quotient of the full holonomy by the
restricted holonomy group is a special case of the following fact
when applied to the universal covering. In the following when
referring to semi-Riemannian manifold, for brevity of notation we do
not mention explicitly the metrics. Furthermore, $\wt P_{\wt
\gamma}$ denotes the parallel transport along a curve $\wt \gamma$
in $\tem$ and $P_\gamma$ along a curve $\gamma$ in $M$, both with
respect to the Levi-Civita connections of the semi-Riemannian
metrics on $\tem$ and $M$, respectively.

\bs\label{coversatz} Let $\tem$ be a connected semi-Riemannian
manifold and $\Gamma$ a properly discontinuous group of isometries
of $\tem$ inducing the semi-Riemannian covering $\pi:\tem\to
M:=\tem/\Gamma $. Then, for any points $p\in M$ and $\wt p$
in the fibre $\pi^{-1}(p)$ we have:
\begin{enumerate}[(i)]
\item
The holonomy group $\Hol_{\wt p}(\tem)$ injects homomorphically into $\Hol_{p}(M)$ via
\[
\iota:\wt P_{\wt \gamma}\mapsto P_{\pi\circ\wt \gamma},\]
for $\wt \gamma$ a loop at $\wt p$,
 and the image is a normal subgroup.
\item The following map is a surjective group homomorphism,
\be
\Phi:\Gamma&\to&\Hol_p(M)/\Hol_{\wt p}(\tem)
\\
\s&\mapsto& \left[ P_{\gamma}\right], \ee where $\gamma $ is a loop
at $p$ that, when lifted to a curve $\wt\gamma$ starting at $\wt p$,
ends at $\s^{-1}(\wt p)$.
\item Let $\gamma$ be a loop in $M$ at $p\in M$. Then,
using the identification of $T_{p}M$ with $T_{\wt p}\widetilde{M}$
by $d\pi_{\wt p}$, the parallel transport along $\gamma$ is given by
\begin{equation}
\label{parallel-quotient} P_{\gamma}= d\sigma_{\s^{-1}(\wt p)} \circ
\tilde{P}_{\tilde{\gamma}},
\end{equation}
where $\wt \gamma$ is the lift of $\gamma$ starting at $\wt p$ and
$\wt\gamma(1)=\s^{-1}(\wt p)$ with $\sigma\in \Gamma$. In
particular,
\begin{equation} \phi(\sigma):= d\sigma_{\s^{-1}(\wt p)} \circ
\tilde{P}_{\tilde{\gamma}} = (d\sigma^{-1}|_{p})^{-1} \circ
\tilde{P}_{\tilde{\gamma}} \label{representative-quotient}
\end{equation}
is a representative of
$\Phi(\s)\in \Hol_p(M)/\Hol_{\wt p}(\tem)$.
\end{enumerate}
 \es
\bprf
(i)
Clearly, $\iota$ is a group homomorphisms: If $\wt \gamma$ and $\wt
\delta$ are loops at $\wt p\in \tem$, we have\footnote{Note, that by
$\gamma*\delta$ we denote the joint path that first runs through
$\delta$ and then through $\gamma$.}:
\[
\iota(\wt P_{\wt \gamma}\cdot \wt P_{\wt \delta})=\iota(\wt P_{\wt
\gamma*\wt\delta}) = P_{\pi\circ (\wt \gamma*\wt\delta)} =
P_{\pi\circ \wt \gamma*\pi\circ \wt\delta} =\iota (\wt P_{\wt
\gamma})\cdot \iota(\wt P_{\wt \delta}).\] That $\iota$ is injective
follows from the fact that $\pi$ is a local
 isometry, which implies that
 \[ d\pi_{\wt{p}}^{-1} \circ P_{\pi\circ \wt \gamma} \circ
  d\pi_{\wt{p}}
  =  \wt{P}_{\wt{\gamma}}\]
  (see for example \cite[p. 91]{oneill83}).
  To show that the image of $\iota$ is a normal subgroup, we proceed in the same way as for the restricted holonomy group.
  Let $\gamma$ be a loop at $p\in M$ that lifts to a loop at $\wt p$ and $\delta$ a loop at $p=\pi(\wt p)$.
  Denote by $\wt\delta$ the lift of $\delta$ that starts at $\wt p$ and by
$\wt \gamma$ the lift of $\gamma$ starting at $\wt\delta(1)\in \pi^{-1}(p)$.
Note that $\wt \gamma$ is a loop at  $\wt\delta(1)$ and
 we have
  \[
  P_\gamma \cdot P_\delta=
P_\delta\cdot  P_{\delta^{-1} *\gamma *\delta}
  =P_\delta\cdot P_{\pi\circ (\wt\delta^{-1}*\wt\gamma*\wt\delta)}
  .\]
  Since $\wt\delta^{-1}*\wt\gamma*\wt\delta$ is a loop at $\wt p$, the image of $\iota$ is normal.

 (ii) First we have to verify that the map $\Phi$ is well defined.
 For two curves $\wt\gamma$ and $\wt\delta$  starting at $\wt p\in \tem $ and
 ending at $\s^{-1}(\wt p)$ for a $\s\in \Gamma$ we can write
 \[\wt P_{\wt\delta}=\wt P_{\wt\gamma}\cdot \wt P_{\wt\gamma^{-1}*\wt\delta}\]
 which shows that
 \[P_{\pi\circ\wt\delta}
 =P_{\pi\circ \wt\gamma}\cdot P_{\pi\circ(\wt\gamma^{-1}*\wt\delta)}.
 \]
Noting that $\wt\gamma^{-1}*\wt\delta$ is a loop at $\wt p$
shows that the image of $\s$ under $\Phi$ does not depend on the chosen curve $\wt \gamma$.

Next we show that $\Phi$ is a group homomorphism. Let $\s_1$ and
$\s_2$ two elements of $\Gamma$, and $\gamma_1$, $\gamma_2$ two
loops at $p\in M$ that lift to curves starting at $\wt p$ and ending
at $\s_1^{-1}(\wt p)$ and $\s_2^{-1}(\wt p)$, respectively.
 Now, consider the
curve $\wt \gamma:= (\s_2^{-1}\circ \wt \gamma_1) * \wt \gamma_2$ in
$\wt M$.  $\wt \gamma$ starts in $\wt p$ and ends in
$\s_2^{-1}(\s_1^{-1}(\wt p)) = (\s_1 \cdot \s_2)^{-1}(\wt p)$ and
projects to $\gamma_1*\gamma_2$. Hence, we have
\[
\Phi(\s_1) \cdot \Phi(\s_2)\ = [P_{\gamma_1*\gamma_2}] = [P_{\pi
\circ \wt \gamma}] = \Phi( \s_1 \cdot \s_2).
\]
Finally, $\Phi$ is surjective since $\tem $ is connected.

(iii)
Since $\pi$ is a local isometry, we have
       $\, P_{\gamma} = d\pi_{\tilde{\gamma}(1)} \circ \tilde{P}_{\tilde{\gamma}} \circ
        (d\pi_{\tilde{\gamma}(0)})^{-1}\,$. Then the statement follows using
the identification $T_{p}M \simeq T_{\wt p}\tM$ by $\d\pi_{\wt p}$
and $d\pi_{\s^{-1}(\wt p)}= d\pi_{\wt p} \circ d\s_{\s^{-1}(\wt
p)}$. \eprf

\section{Construction of Lorentzian manifolds with disconnected holonomy}

The following proposition shows that special classes of non-connected
subgroups of the stabilizer $\mathrm{Stab}_{\O(1,n+1)}(L)$ can
be realized as holonomy group of  Lorentzian manifolds.

\begin{satz} \label{satz-realisation-standard} Let $G\subset \O(n)$  be the holonomy group of an
$n$-dimensional Riemannian manifold. Then the subgroups
\[
G\ltimes \R^n,\
(\R^+ \times G)\ltimes \R^n,\
(\R^* \times
G)\ltimes \R^n,\ (\Z_2\times G)\ltimes \rrn\]
in $\mbox{Stab}_{\O(1,n+1)}(L)$ can be realized as holonomy groups of
Lorentzian manifolds.
\end{satz}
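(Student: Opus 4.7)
The plan is to realise each of the four groups as the full holonomy of an explicitly constructed Lorentzian manifold, using a fixed Riemannian manifold $(N,h)$ of dimension $n$ with $\Hol(N,h) = G$ as the input. For the two connected types, $G\ltimes \R^n$ (Type 2) and $(\R^+ \times G)\ltimes \R^n$ (Type 1), I would rely either on Galaev's realisation results from \cite{galaev05} or, more concretely, on a \emph{pp-wave / Brinkmann wave} construction over $(N,h)$. Explicitly, on $\wt M = \R^2 \times N$ with coordinates $(v,u,x)$, the metric $\tg = 2\, du\, dv + H(u,x)\, du^2 + h$ has parallel null vector field $\partial_v$ and restricted holonomy contained in $G \ltimes \R^n$, with equality achievable by choosing $H$ generically so that the components of the curvature tensor generate $\g \ltimes \R^n$ via Ambrose--Singer. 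A Type~1 variant $\tg = 2\, du\, dv + (v^2 H_0 + H_1)\, du^2 + h$ introduces a nontrivial $\R^+$-component in the connection and yields restricted holonomy $(\R^+\times G)\ltimes \R^n$.

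For the two disconnected cases I would take the quotient of one of these models by a properly discontinuous cyclic group of isometries $\Gamma = \la \sigma \ra$ and apply Proposition~\ref{coversatz}(iii) to identify the coset produced. By \eqref{representative-quotient}, the element $\Phi(\sigma) \in \Hol_p(M)/\Hol_{\wt p}(\wt M)$ is represented by $d\sigma^{-1}_{\wt p} \circ \wt P_{\wt \gamma}$, so to obtain the desired extension it suffices to choose $\sigma$ so that $d\sigma_{\wt p}$, viewed in $\O(1,n+1)$, contributes the required factor in $\O(n) \times \R^*$ not present in the restricted holonomy.

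Concretely, for $(\Z_2 \times G)\ltimes \R^n$ I begin with the Type~2 model and choose an isometric involution $\tau$ of $(N,h)$ whose differential $\sigma_0$ at the base point lies in $\Nor_{\O(n)}(G) \setminus G$ (for example a reflection preserving the Berger structure of $G$). Provided $H$ is chosen $\tau$-invariant and $u_0$-periodic, the map $\sigma(u,v,x) = (u+u_0, v, \tau(x))$ is an isometry generating a properly discontinuous action, and Proposition~\ref{coversatz}(iii) forces $\Phi(\sigma)$ to be represented by $(1,\sigma_0,0)$, producing the $\Z_2$-extension. For $(\R^* \times G)\ltimes \R^n$ I start from the Type~1 model and take $\sigma(u,v,x) = (-u + u_0, -v, x)$; this reverses the null line $\cL$ and contributes $(-1, \mathbf{1}, 0) \in \R^* \times \O(n) \ltimes \R^n$, so after verifying isometry and proper discontinuity the full holonomy of the quotient becomes $(\R^* \times G) \ltimes \R^n$. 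In all cases Proposition~\ref{Satz-full-hol} identifies the full holonomy with the claimed group once the cosets are known.

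The principal obstacle is the tension between the two requirements on $H$: the translations needed to guarantee proper discontinuity (see Remark~\ref{Remark-properly disc}) restrict $H$ to functions that are periodic in $u$ and, in the $\Z_2$-case, additionally $\tau$-invariant, while Ambrose--Singer forces $H$ to be rich enough that its iterated derivatives in the $x$-directions span $\g$. One must therefore exhibit a single $H$ satisfying both, which can be arranged by a generic choice within the space of $\tau$-invariant, $u_0$-periodic functions. Once such an $H$ is produced, combining Proposition~\ref{coversatz} with Proposition~\ref{Satz-full-hol} delivers the four holonomy groups and completes the proof.
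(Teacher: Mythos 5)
Your constructions for $G\ltimes \R^n$ and $(\R^+\times G)\ltimes\R^n$ are essentially the paper's: a pf-wave $2\,dv\,du+2f\,du^2+h$ over $(N,h)$ with $\Hol(N,h)=G$, with $\del_v f=0$ in the first case and $\del_v f$ nonconstant (your $v^2H_0$ does this) in the second. One caveat even here: for disconnected $G$ the Ambrose--Singer argument only controls the \emph{restricted} holonomy $G^0\ltimes\R^n$; to get the full $\O(n)$-part equal to $G$ you need the explicit parallel-transport computation showing that $\pr_{T_pN}\circ P^g_\delta|_{T_pN}=P^h_\gamma$ for curves $\delta=(v,u,\gamma)$, i.e.\ that the $\O(n)$-projection of the full Lorentzian holonomy is exactly $\Hol_p(N,h)$. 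This is in the paper and is implicit in your setup, so it is a looseness rather than an error.

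The two disconnected cases have genuine gaps. First, for $(\Z_2\times G)\ltimes\rrn$ you place the $\Z_2$ inside $\O(n)$ via an isometric involution $\tau$ of $(N,h)$ with $d\tau\in\Nor_{\O(n)}(G)\setminus G$. This produces the wrong group: read alongside $(\R^+\times G)\ltimes\R^n$ and $(\R^*\times G)\ltimes\R^n$, the $\Z_2$ in the statement sits in the dilation factor $\R^*$ of $\Stab_{\O(1,n+1)}(L)=(\R^*\times\O(n))\ltimes\rrn$ (the paper realizes it by a non-time-orientable quotient, with $d\sigma(\del_v)=-\del_v$ and $d\sigma|_{TN}=\mathrm{Id}$), whereas your construction yields $\hat G\ltimes\rrn$ with $\hat G=\langle G,\,d\tau\rangle\subset\O(n)$ --- generically a semidirect extension $G\rtimes\Z_2$ inside $\O(n)$, not $(\Z_2\times G)\ltimes\rrn$. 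Moreover such a $\tau$ need not exist on a given $(N,h)$ with holonomy $G$, so the argument does not cover an arbitrary Riemannian holonomy group $G$. Second, your involution $\sigma(u,v,x)=(-u+u_0,-v,x)$ for the $\R^*$ case fixes the entire slice $\{u=u_0/2,\ v=0\}\times N$, so the action is not free (it violates (PD1)) and the quotient is not a manifold. The paper's fix is to take $\Omega=\R^2\setminus\{(0,0)\}$, require $f(-v,-u,p)=f(v,u,p)$, and quotient by $(v,u,p)\mapsto(-v,-u,p)$, whose only fixed point has been removed; this single involution then yields $(\Z_2\times G)\ltimes\rrn$ when $\del_vf=0$ and $(\R^*\times G)\ltimes\rrn$ when $\del_vf$ is nonconstant, with no hypothesis on the isometries of $N$ and no periodicity requirements on $f$ (so the ``tension'' you describe in your last paragraph does not actually arise).
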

\begin{proof}
The first part of the proof follows \cite{leistner01} or
\cite{baum-eichfeld}, where it was given for the restricted holonomy
group. We fix a Riemannian manifold $(N,h)$ whose holonomy group is
given by the not necessarily connected group  $G$. Furthermore we
denote by $(v,u)$ coordinates on $\rr^2$ and fix an open domain
$\Omega$ in $\rr^2$, for example $\Omega=\rr^2$. Also we chose a
smooth function $f\in C^\infty (\Omega\times N )$ with the property
that
\begin{eqnarray}
\label{hess} 0 & \neq &\det \mathrm{Hess}^h_p(f(v_0,u_0,\cdot))
\quad \text{for some $p \in N$ and $(v_0,u_0) \in \Omega$.}
\end{eqnarray}
Then we define a Lorentzian manifold $(M,g)$ by
\begin{equation}\label{Lmetric}\Big( M=\Omega\times N, g^{f,h}=2\d v\d u+2f\d u^2 + h\Big).\end{equation}
Computing the Levi-Civita connection $\nabla^g$ of $g$ we obtain as
the only non-vanishing terms \be
\nabla^g_XY&=&\nabla^h_XY,\ \\
\nabla^g_{\del_u}X\ =\ \nabla^g_X{\del_u}&=& \d f(X)\del_v,
\\
\nabla^g_{\del_u}\del_u&=&\del_u(f)\del_v-\mathrm{grad}^h(f),
\\
\nabla^g_{\del_u}\del_v\ =\
\nabla^g_{\del_v}\del_u&=&\del_v(f)\del_v \ee
 for $X,Y\in \Gamma(TN)$, $\nabla^h$ and $\mathrm{grad}^h$ with respect to the Riemannian metric $h$ on $N$.
This allows us to compute the parallel transport from a point
$q=(v_0,u_0,p)$ of a vector $X_0\in T_pN$ along a curve
$\delta=(v,u,\gamma):[0,1]\to M$ for $\gamma$ a curve in $N$ and
$\delta(0)=q$. Indeed, if $X:[0,1]\to TN$ is the vector field along
$\gamma$ that is the parallel transport of $X_0\in T_pN$ with
respect to $\nabla^h$ and the function $\varphi:[0,1]\to\rr$
satisfies the ODE \be
\dot{\varphi}+\varphi \cdot \dot{u}\cdot\del_vf\circ \delta + \dot{u}\cdot \d f(X)\circ \delta &=& 0,\\
\varphi(0)&=&0, \ee then the vector field,
\begin{eqnarray} \label{parallel-general-wave}
\varphi \cdot (\del_v \circ \delta)+X
\end{eqnarray}
is the parallel transport of $X_0$ along $\delta$. This shows that
for the curve $\delta$ we have that
\[\pr_{T_pN}\circ \cal P^g_{\delta}|_{T_pN}=\cal P^h_\gamma.
\]
Hence, the $\O(n)$ projection of $\Hol_q(M,g)$ is given as
$G=\Hol_p(N,h)$.

Furthermore, when computing the curvature $R^g$ of $g$ we get \be
R^g(\del_u,X)Y&=&\mathrm{Hess}^h(f)(X,Y)\del_v. \ee Taking this at
points $(v_0,u_0,p)$ where $\det
\mathrm{Hess}^h_p(f(v_0,u_0,\cdot))\not=0$, this shows that the
holonomy algebra,  and hence both, the restricted and the full
holonomy group contain $\rrn$.

Next we look at the $\rr$-component of the holonomy group. Clearly,
when $\del_vf=0$, the vector field $\del_v$ is parallel and what we
have shown so far implies that $\Hol(M,g)=G\ltimes \rrn$. Otherwise,
again by computing the curvature term, \be
R^g(X,\del_u)\del_v&=&X(\del_v(f))\del_v \ee and by choosing  $f$
such that  $\del_vf$ is not constant on $N$, we conclude that the
holonomy algebra contains $\rr$ and hence the restricted holonomy
contains $\rr^+$. Therefore, as $(M,g)$ is clearly time-orientable,
the full holonomy group is equal $(\rr^+\times G)\ltimes \rrn$.

Using this result, we construct non-time-orientable Lorentzian
manifolds with holonomy group $(\rr^*\times G)\ltimes \rrn$ as well
as with holonomy group $(\Z_2 \times G)\ltimes \rrn$. We set $\Omega
:=\rr^2\setminus \{(0,0)\}$, and choose a function $f\in
C^\infty(\Omega\times N)$ which satisfies $f(-v,-u,p)=f(v,u,p)$ and
condition \eqref{hess}.
Then, for the
Lorentzian manifold as defined in \eqref{Lmetric}, the map
\[(v,u,p)\mapsto\sigma(v,u,p):=(-v,-u,p)
\]
is an isometry of $(M,g^{f,h})$ and generates the group $\Z_2$ in
the isometry group of $(M,g^{f,h})$. This group acts properly discontinous on $M$
and hence $M/\Z_2$ becomes a smooth Lorentzian manifold, which is
not time-orientable.
By Proposition~\ref{coversatz}, its holonomy is given as
$(\rr^*\times G)\ltimes\rrn$ if $\del_v f$ is not constant on $N$,
and as $(\Z_2\times G)\ltimes\rrn$ if $f$ is chosen such that
$\del_vf=0$. Indeed, if $\delta$ is a loop in $M/\Z_2$, starting and
ending in $\pi(0,1,p)$, with a non-closed
 lift $\wt \delta$, then $\wt\delta$  is given by $\wt\delta(t)=
(v(t), u(t), \gamma(t))$, starting at $(0,1,p)$ and  ending  at
$(0,-1,p)=\sigma^{-1}(0,1,p)$, where $\gamma$ is a loop in $N$ based
at $p$. Then, by the above formulae, the parallel transport to
$(0,-1,p)$ of the vector $\del_v$ and a vector $X_0$ tangent to $N$
at $(0,1,p)$ along $\wt \delta$ is given as $a\del_v$ for a positive
number $a$, which is $1$ when $\del_vf=0$, and by $P^h_\gamma(X_0)+b
\del v$, respectively. Hence, as
$\d\sigma_{(0,-1,p)}(\del_v)=-\del_v$ and
$\d\s_{(0,-1,p)}|_{TN}=\mathrm{Id}$,
formula~(\ref{representative-quotient}) in
Proposition~\ref{coversatz} gives the result.\end{proof}

\bbem\label{ppbem}
Lorentzian metrics of the form~\eqref{Lmetric} are sometimes  called pf-waves, for ``plane-fronted waves'' ,
or $N$p-waves, ``$N$-fronted with parallel rays'' (for example in \cite{Sanchez1} and \cite{Sanchez2}). Special classes of pf-waves are
\bnum
\item[a)]
{\em pp-waves}, ``plane fronted with parallel rays'', for which $h$ is flat,
\item[b)] {\em plane waves}, for which $h$ is flat and $f$ is a quadratic polynomial in the coordinates on $N$
with $u$-dependent coefficients, and
\item[c)] {\em Cahen-Wallach spaces}, \cite{Cahen-Wallach:70,Cahen-Parker:80},
which are Lorentzian symmetric spaces. Here  $h$ is flat and $f$ is
a quadratic polynomial in the coordinates on $N$ with constant
coefficients. \enum \ebem Note that the construction for manifolds
with holonomy $(\Z_2\times G)\ltimes \rrn$ cannot be generalised
directly to discrete subgroups of Lorentz boosts of $\rr^{1,1}$: As
we observed in remark \ref{Remark-properly disc}, a group of Lorentz
boosts $\Gamma$ generated by
$\mathrm{diag}(\e^{\lambda},\e^{-\lambda})\subset \SO(1,1)$ will not
act properly on $\rr^{1,1}\setminus \{0\}$ and the quotient will not
be Hausdorff. On the other hand, taking $\Omega$ smaller by removing
a closed ball around the origin in $\rr^{1,1}$, $\Omega$ would no
longer be invariant under $\Gamma$. We will avoid these difficulties
in the following construction by which  we will obtain Lorentzian
manifolds with disconnected holonomy group contained in the
stabiliser of a null line from the following data: \bnum
\item[1)] A simply connected Riemannian manifold $(N,h)$ together with  a function $f$ on $M$ with the property $\eqref{hess}$.
\item[2)] a properly discontinuous subgroup $\Gamma$ of isometries of the Lorentzian manifold \\
$(\tem:=\Omega\times N,g^{f,h}:=2dvdu+2fdu^2+h)$. \enum

\btheo\label{method} Let $(N,h)$ be a Riemanian manifold, $\Omega$
an open domain in $\rr^2$ with global coordinates $(v,u)$, $f$ a
smooth function on $\Omega\times N$ with $\del_vf=0$ and satisfying
property~\eqref{hess}. Then every isometry $\s$ of the Lorentzian
manifold
\[\big( \wt M=\Omega\times N, \; g^{f,h}=2\d v\d u+2f\d u^2 + h \big)\]
can be written as
\begin{equation}\label{isometry}
\sigma(v,u,p)=\Big(a_\sigma v+\tau_\sigma(v,u,p),
\frac{u}{a_\sigma}+b_\sigma, \nu_\sigma(v,u,p)\Big),\end{equation}
with certain  $a_\sigma\in \rr^*$, $b_\sigma\in \rr$,
$\tau_\sigma\in C^\infty(\tem)$ and $\nu_\sigma\in
C^\infty(\tem,N)$, such that
\[d\tau_\sigma(\del_v)=d\nu_\sigma(\del_v)=0,\]  and for each
$(v,u)\in \Omega$, $\nu_\sigma(v,u,.)$ is an isometry of the
Riemannian manifold $(N,h)$.

Furthermore, let $\Gamma$ be a properly discontinuous group of
isometries of $(\tem,g^{f,h})$ and let $\pi: \tem \to
M:=\wt{M}/{\Gamma}$ be the corresponding covering of the Lorentzian
manifold $M$.
 Then,
 at a point
 $\pi(\wt{q})\in M$
  for $\wt{q}=(v_0, u_0, p)\in \tem$,
  the map $\Phi:\Gamma\to \Hol_q(M)/\Hol_{\wt q}(\tem)$ of Proposition~\ref{coversatz}, with the image written
  in the decomposition
\[
\rr\cdot\del_v(\wt q) \+ T_{p}N \+ \rr\cdot (\del_u-f\del_v)(\wt{q})
\]
of $T_{\wt{q}}\tem\simeq T_{\pi(\wt q)}M$,
 is given by the representative
  \[
  \hat{\phi}(\s)=
\begin{pmatrix}
a_\sigma&0&0 \\ 0& (\d\nu_{\sigma^{-1}}^0|_{p})^{-1}\circ P^h_{\sigma}&0\\
0&0&a^{-1}_\sigma\end{pmatrix} \in \Phi(\sigma),
\]
where \bnum
\item[i)] $\nu_{\sigma^{-1}}^0:=\nu_{\sigma^{-1}}(v_0,u_0,.)$ the isometry of $N$ defined by $\sigma^{-1}$ at $(v_0,u_0)\in \Omega$,
\item[ii)]
$P^h_\sigma$ denotes the parallel transport with respect to $h$
along some curve $\gamma: [0,1]\to N$ with $\gamma(0)=p$ and
$\gamma(1)=\nu_{\sigma^{-1}}(v_0,u_0,p)$. \enum
 In particular,
   the full holonomy group of the Lorentzian manifold $M$  is given as
\begin{equation}\label{methodhol} \Hol_{\pi(\wt{q})}(M)=
\left\{\left.
 \hat{\phi}(\s) \right| \sigma\in \Gamma \right\}\cdot
\Hol_{p}(N,h)\ltimes \rrn.
\end{equation}
\etheo \bprf The construction of $g^{f,h}$ and the conditions on $f$
imply by Proposition \ref{satz-realisation-standard} that the
holonomy group of $(\tem, g^{f,h})$ is given by the group
$\Hol_p(N,h)\ltimes\rrn$.
 Furthermore, the vector field $\del_v$ is parallel and so is its push forward $\sigma_*\del_v$ by an isometry $\sigma$.
 Indeed, for an isometry  $\sigma$ we have
 \[0\ =\ \sigma_*(\tnab_X\del_v)\ =\ \tnab_{(\sigma_* X)}(\sigma_*\del_v)\]
 for all vector fields $X$ on $\tem$. By the conditions on $f$,  $\tem$ is indecomposable,
 which implies that $\sigma_*\del_v$ is a constant multiple of $\del_v$, as otherwise $\del_v$ and
 $\sigma_*\del_v$ would span a non-degenerate parallel distribution. Hence, we write the isometry $\sigma$
 in components according to $\tem=\Omega\times N$ and using global  coordinates $(v,u)$ on $\Omega$ as
 $\sigma=(v\circ \sigma,u\circ \sigma, \nu_\sigma)$. Then, when $\del_v(v\circ \sigma)$ denotes the
 directional derivative $\del_v(v\circ \sigma)(p)=d(v\circ\sigma)_p(\del_v(p))$,  we have
 \[ \sigma_*\del_v=\del_v(v\circ \sigma)\circ \sigma^{-1} \cdot \del_v\] and that  $\del_v(v \circ \sigma)$ is
 constant, i.e., there is a constant $a_\sigma\in \rr^*$ and  a function $\tau_\sigma$ of $u$ and $p\in N$
 such that $v\circ \sigma (v,u,p)=a_\sigma v+\tau_\sigma(u,p)$.
Also, because of
\[\d\sigma(\del_v)=
a_\sigma \del_v+d(u\circ\sigma)( \del_v) \del_u+d\nu_\sigma(\del_v )\]
we get  that
 $u\circ \sigma$ and $\nu_\sigma$ do not depend on $v$.
 We also note that $\sigma_*X$ is still orthogonal to $\del_v$ for $X\in \Gamma(TN)$. This implies that
 $d\sigma(X)\in \rr\cdot \del_v\+TN$ and thus
 that $d(u\circ \sigma)(X)=0$ which
 shows that $u\circ \sigma$ is a function only of the coordinate $u$. We also note that
 \[1=g(\del_v,\del_u)=\sigma^*g(\del_v,\del_u)=a_\sigma \frac{d}{du}(u\circ\sigma),\]
 which shows that $u\circ \sigma=\frac{1}{a_\sigma}u+b_\sigma$ with a constant $b_\sigma\in \rr$. Finally
 \[
 h(X,Y)=g(X,Y)=\sigma^*g(X,Y)=\nu_\sigma^*h(X,Y)\]
 for all $X,Y\in TN$ shows that $\nu_\sigma(v,u,.)$ is an isometry of $(N,h)$
This  proves formula~\eqref{isometry}.

In order to prove the result about the holonomy group,  consider a
curve  \[ \wt\delta(t)=( v(t):=v\circ \wt\delta(t),
u(t):=u\circ\wt\delta(t),\gamma(t))\] in $\tem$ with
$\wt{\delta}(0)=\wt q=(v_0,u_0,p)$ and
$\wt\delta(1)=\sigma^{-1}(\wt{q})$,  $\gamma$ a curve in $N$ with
$\gamma(0)=p$ and $\gamma(1)=\nu_{\sigma^{-1}}(\wt q)$.
Then the formulae in the proof of of
Proposition~\ref{satz-realisation-standard} imply that the parallel
transport along $\wt\delta(t)=( v(t), u(t),\gamma(t))$  is given in
the decomposition
\[
\rr\cdot\del_v\+TN\+\rr\cdot (\del_u-f\del_v)\] as
\[
\wt P_{\wt \delta} =  \begin{pmatrix}1& *&*\\
 0& P^h_{\gamma}&*\\
 0&0&1 \end{pmatrix}.\]
 Secondly, we have seen that the differential of $\sigma$ at $\sigma^{-1}(\wt q)$ is given as
 \[
 \d\sigma_{\sigma^{-1}(\wt q)}=
 \begin{pmatrix}
a_\sigma &*&* \\ 0& \d^N\nu_\sigma|_{\sigma^{-1}(\wt q)} & * \\
0&0&a_\sigma^{-1}\end{pmatrix} =  \begin{pmatrix}
a_\sigma &*&* \\ 0& (\d\nu^0_{\sigma^{-1}}|_p)^{-1} & * \\
0&0&a_\sigma^{-1}\end{pmatrix}.
 \]

Having this, we can apply formula~(\ref{parallel-quotient}) in
Proposition~\ref{coversatz} directly. For a loop $\delta:[0,1]\to
\tem/\Gamma$ at $\pi(\wt q)$, the lift
 is
 a curve $\wt\delta$ in $\tem $ such that $\wt{\delta}(0)=\wt q$ and $\wt{\delta }(1)=\sigma^{-1}(\wt q)$ for a $\sigma \in \Gamma$.
  Hence, by formula~(\ref{parallel-quotient}) in Proposition~\ref{coversatz} the parallel transport along $\delta$ is given by
 \[ P_\delta=\begin{pmatrix}
a_\sigma &*&* \\ 0&(\d \nu^0_{\sigma^{-1}}|_p)^{-1} \circ P^h_{\gamma}&*\\
0&0&a^{-1}_\sigma
\end{pmatrix}.
 \]
The same argument as in the proof of Proposition \ref{Satz-full-hol}
shows, that we can use the matrix
\[ \hat{\phi}(\sigma) = \begin{pmatrix}
a_\sigma &0&0 \\ 0&(\d \nu^0_{\sigma^{-1}}|_p)^{-1} \circ P^h_{\gamma}& 0\\
0&0&a^{-1}_\sigma
\end{pmatrix}\] as representative of the class $\Phi(\sigma)\in Hol_q(M)/Hol_{\wt q}(\wt M) $.
This proves the second statement. \eprf

The examples in the next section will be based on this Theorem. For
most of them we will apply the following
\bfolg\label{method-folge} Let $(N,h)$ be an $n$-dimensional
Riemannian manifold, $\Gamma$ a properly discontinuous group of
isometries of $(N,h)$, and $f$ a $\Gamma$-invariant function on $N$
satisfying property~\eqref{hess}. Fix a not necessarily finite  set
of generators $(\gamma_1, \gamma_2, \ldots  )$ of $\Gamma$.
Corresponding to these generators of $\Gamma$ fix a sequence of
integers $\underline{m}=(m_1, m_2, \ldots )$ and of  real numbers
$\underline{\lambda}:=(\lambda_1, \lambda_2, \ldots )$.
 \bnum
 \item
 Consider the Lorentzian metric
 \[g^{f,h}=2dvdu+2fdu^2 +h\]
 on $\tem:=\rr^2 \times N$. Suppose that the
 group $\Gamma_{\underline{m}}$ generated in the isometry group of $(\tem,g^{f,h})$ by the isometries
 \[
 \sigma_{m_i}(v,u,p):=((-1)^{m_i}v,(-1)^{m_i} u, \gamma_i(p))
 \]
 for $i=1, 2, \ldots $ is of quotient type or restrict to $\tem_0:= (\rr^2\setminus \{(0,0)\}) \times N$ otherwise. Then
 $\Gamma_{\underline{m}}$ is properly discontinuous and the holonomy group of the Lorentzian
 manifold $(M:=\tem_{(0)}/\Gamma_{\underline{m}},g^{f,h})$ at $\pi(v,u,p)$
 is given as
 \[
{L\Gamma_{\underline{m}}}\cdot\left( \Hol_p(N)\ltimes \rrn\right),\]
 where $ L{\Gamma_{\underline{m}}}$ is the group that is
  generated in $\O(1,n+1)$ by
 the linear maps \[
 \begin{pmatrix} (-1)^{m_i}& 0&0
 \\
 0&\phi(\gamma_i)&0
 \\
 0&0&(-1)^{m_i}
 \end{pmatrix}\ \in\  \Z_2\times \phi(\Gamma),
 \]
 where $\phi(\gamma_i)$ is the representative for the class $\Phi(\gamma_i)\in \Hol_{\pi(p)} (N/\Gamma)/\Hol_p(N)$ as described in
 of Proposition~\ref{coversatz}.
 In particular, for appropriate choices of $\underline{m}$, $(M,g^{f,h})$ is not time-orientable,
 admits a parallel null line bundle but no parallel vector field.
 \item Set $\Omega:=\{(v,u)\in\rr^2\mid u>0\}$ and consider the Lorentzian metric
 \[g^{f/u^2,h}=2dvdu+\frac{2}{u^2}fdu^2 +h\]
 on $\tem:=\Omega\times N$.
 Suppose that the group $\Gamma_{\underline{\lambda}}$ generated in the isometry group of $(M,g^{f/u^2,h})$ by the isometries
 \[
 \sigma_{\lambda_i}(v,u,p):=(\e^{\lambda_i}v,\e^{-\lambda_i} u, \gamma_i(p))
 \]
 for $i=1, 2, \ldots $ is of quotient type. Then $\Gamma_{\underline{\lambda}}$ is properly discontinuous and
 the holonomy group of the Lorentzian manifold
 $(M:=\tem/\Gamma_{\underline{\lambda}},g^{f/u^2,h})$ at $\pi(v,u,p)$
 is given as
 \[
{L\Gamma_{\underline{\lambda}}}\cdot (\Hol_p(N)\ltimes \rrn),\]
 where $ L{\Gamma_{\underline{\lambda}}}$ is the group that is
  generated in $\O(1,n+1)$ by
 the linear maps
 \[
 \begin{pmatrix} \e^{\lambda_i}& 0&0
 \\
 0&\phi( \gamma_i)&0
 \\
 0&0&\e^{-\lambda_i}
 \end{pmatrix}\ \in \ \R^+\times \phi(\Gamma).
 \]
 In particular, $(M,g^{f,h})$ is time-orientable, admits a parallel null line bundle, but,
 for appropriate choices of $\underline{\lambda}$, admits no parallel vector field.
 \enum
\efolg \bprf Since $\Gamma_{\underline{m}}$ and
$\Gamma_{\underline{\lambda}}$ are of quotient type, by
Lemma~\ref{properlemma} they are properly discontinuous as well. By
the constructions of the Lorentzian metrics, they also act
isometrically. Then the formula for the holonomy group of the
Lorentzian quotient follows from formula~\eqref{methodhol} in
Theorem~\ref{method}. \eprf

\bbem\label{compact-remark} Obviously, the constructions presented
in this section always give non-compact examples. But in simple
situations they can be modified in order to obtain compact
Lorentzian manifolds by replacing $\Omega$ by a torus $S^1\times
S^1$ with a suitable metric. Start with a compact Riemannian
manifold $(N,h)$ with holonomy $G$ and a function $f$ on $S^1\times
S^1\times N$, even in the first two entries and satisfying condition
\eqref{hess}. Then the Lorentzian metric on $M=S^1\times S^1\times
N$ given by
\[
g^{f,h}=2d\vf d\theta+f d\theta^2+h,\] where $\vf$ and $\theta$ are
standard angle coordinates on $S^1\times S^1$, has holonomy
$G\ltimes \rrn$ or $(\rr^+\ltimes G)\ltimes \rrn$. Then we can
consider the same involution on $M$ as in Proposition
\ref{satz-realisation-standard}, in coordinates
\[(\vf, \theta, p)\mapsto (\vf + \pi, \theta + \pi, p), \]
to obtain compact Lorentzian manifolds with holonomy $(\Z_2\times
G)\ltimes \rrn$ or $(\rr^*\times G)\ltimes \rrn$.

This can be generalised to the situation when we have a cocompact
properly discontinuous group $\Gamma^N$ of isometries of $(N,h)$
being generated by $\gamma_1,\gamma_2 , \ldots $. We choose the
function $f$ to be $\Gamma^N$-invariant and independent of $\theta$
and $\vf$, fix natural numbers $\underline{m}=(m_1,m_2, \ldots)$ and
consider the group $\Gamma_{\underline{m}}$  of isometries of
$(M,g^{f,h})$ which is generated by
\begin{equation}\label{compact-coupled}
(\vf,\theta, p)\mapsto ( \vf + m_i \pi, \theta + m_i\pi,
\gamma_i(p)),\end{equation} for $i=1, 2, \ldots$. Then
$\Gamma_{\underline{m}}$ is of quotient type and the holonomy of
$M/\Gamma_{\underline{m}}$ is contained in the group
$(\Z_2\times\Hol(N/\Gamma^N))\ltimes\rrn$ but might have a coupling
between the $\Z_2$ and the $\Hol(N/\Gamma^N)$ part. \ebem

\section{Examples with disconnected holonomy}
\label{section-pp-waves}
\label{examples} In the following we will consider quotients of
certain Lorentzian manifolds by a discrete group of isometries. These examples
will illustrate some of the characteristic features of the holonomy groups that
can be obtained by the construction given in Theorem~\ref{method}.

\subsection{Flat manifolds}
We will start off with the flat case and
give an example of a flat Lorentzian manifold with indecomposable,
non irreducible full holonomy (see Remark \ref{bem1}).
Let $\R^{1,n+1}$ be the Minkowski space and $E(1,n+1)=\O(1,n+1)
\ltimes \R^{1,n+1}$ its isometry group. Any isometry $\gamma$ has
the form  $\gamma(x) = A_{\gamma}x + v_{\gamma}$, where $A_{\gamma}
\in \O(1,n+1)$ and $v_{\gamma}\in \R^{n+2}$. For a discrete subgroup
$\Gamma \subset E(1,n+1)$ we denote by
\[ L_{\Gamma}:= \{ A_{\gamma} \mid \gamma \in \Gamma \}
\subset \O(1,n+1)\] its linear part. Then, by Proposition
\ref{coversatz}, the full holonomy group of a flat space-time
$\R^{1,n+1}/\Gamma$ is given by
\[ \Hol(\R^{1,n+1}/\Gamma) = L_{\Gamma}.\]
In many cases, the holonomy group of $\R^{1,n+1}/\Gamma$ stays
trivial or acts decomposable. For example, if $\R^{1,n+1}/\Gamma$ is
a complete homogeneous flat Lorentzian space, then $\Gamma$ is a
group of translations, hence its holonomy group is trivial (see
\cite{wolf}). The same is true for non-complete flat homogeneous
Lorentzian spaces (see \cite{Duncan-Ihrig} for the result). Looking
at the affine classification of compact 3-dimensional flat
space-times which are proper in the sense that their holonomy is
contained in $\SO^0(1,2)$ in \cite[Sec. 3.6]{wolf},
one finds all with
trivial, with decomposable as well as with indecomposable holonomy
group. Typically, an indecomposable holonomy group appears, if
$\Gamma$ contains an element $\gamma$, such that $A_{\gamma}$ has a
lightlike eigenvector. We show this with a 4-dimensional example.

For a fixed $\theta \in \rr$ we consider the matrix in $A_{\theta}
\in \SO(1,3)$:
\[
A_\theta:=\begin{pmatrix} 1 & -\cos \theta& \sin\theta & -\einhalb
\\
0 & \cos \theta&-\sin\theta & 1\\
0 & \sin \theta&\cos \theta & 0\\
0&0&0&1
\end{pmatrix}.
\]
where we used the basis $(\ell,e_1,e_2,\ell^*)$ as in the first
section. Then, by $\,k\mapsto (A_\theta)^k\,$ we obtain an immersion
of the finite group $Z_m$ into $\O(1,3)$ if $\theta $ is a rational
multiple of $\pi$, and an immersion of the group $\Z$ into $\O(1,3)$
if $\theta$ is an irrational multiple of $\pi$. If we denote the
image of this immersion by $H$, the connected component of $H$ is
given by the identity and thus acts decomposable in a trivial way,
but the group $H$ does not admit a non-degenerate invariant
subspace, and hence it acts indecomposable. The group $H$ can be
easily realized as the holonomy group of a flat space-time. For that
we consider coordinates $(v,u,x,y)$ on $\rr^{4}$ and the flat
Minkowski metric
\[g_0 =2\d u\d v + \d x^2+\d y^2. \]
For $\theta\in \rr$, let $\Gamma \subset E(1,3)$ be the discrete
group of isometries generated by
\[ \phi_\theta (v,u,x,y)=
\  A_\theta (v,u,x,y) +
(0,1,0,0).\] $\Gamma$ acts freely and properly. Hence, the quotient
$\rr^{1,3}/\Gamma$ is a flat Lorentzian manifold with trivial
restricted holonomy and
full holonomy $H$.

\subsection{pp-waves} The next examples will be quotients of
Lorentzian manifolds which are usually referred to as {\em pp-waves}
(see Remark \ref{ppbem}). We will define them as follows: Let $\tem$
be an open set in $\rr^{n+2}$ on which we fix global coordinates as
$(v,u,x^1,\ldots , x^{n})$. Let $f$ be a smooth function on $\tem$
that does not depend on the $v$ coordinate, i.e. $\del_v f=0$. A
pp-wave metric on $\tem$ is a Lorentzian metric $g^{f}$ which, in
these coordinates, is given as
\begin{eqnarray}\label{ppmetric}
g^{f}&=& 2dvdu \,+\, 2f du^2 \,+\, \sum_{i=1}^{n}(\der x^i)^2.
\label{pp3}
\end{eqnarray}
The vector fields
\begin{eqnarray}\label{frame}
\del_v, &  \del_i,&  \del_u - f\del_v\end{eqnarray} form a global
frame in which the metric $g^{f}$ is given as
\[g^{f}=\begin{pmatrix}0&0&1\\0&\1_n&0\\1&0&0\end{pmatrix}.
\]
Here we used the obvious notation $\del_v:=\frac{\partial}{\partial
v}$, $\del_u:=\frac{\partial}{\partial u}$ and
$\del_i:=\frac{\partial} {\partial x^i}$, $i=1,\ldots , n$. Here and
in the following, all matrices  are written with respect to the
basis of the tangent spaces given by these vector fields at the
corresponding points.

\begin{satz}\label{satz-hol-pp}
The holonomy group of an (n+2)-dimensional pp-wave is abelian and if
the matrix $\big(\del_i\del_j f\big)$ is non-degenerate at a point,
the holonomy group is given by
\[\R^{n} \simeq \left\{\begin{pmatrix} 1 & y^t& \tfrac{1}{2}y^t y \\
0&\1_n & -y
\\
0&0&1
\end{pmatrix} \;\; \Big| \;\;  y\in \rr^{n}\right\}.\]
Conversely, any Lorentzian manifold with this holonomy group is
locally a pp-wave.
\end{satz}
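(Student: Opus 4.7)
The plan is to prove both directions by a direct local analysis. For the forward implication, I would first compute the Levi-Civita connection of $g^f$ in the global frame $(\partial_v, \partial_1, \ldots, \partial_n, \partial_u - f\partial_v)$, specialising the formulae from the proof of Proposition~\ref{satz-realisation-standard} to $h$ flat and $\partial_v f = 0$. Both assumptions force $\partial_v$ to be globally parallel, so $R(\cdot,\cdot)\partial_v\equiv 0$. Moreover, because $\nabla^g \partial_i$ lies in $\R\partial_v=\cL$ for every $i$, the sections $\partial_1,\ldots,\partial_n$ descend to globally parallel sections of the screen bundle $\cS$; combined with the parallel vector field $\partial_v$, parts (3) and (4) of the screen-bundle proposition of Section~3 yield $\pr_{\R^*}(\Hol(M,g))=\{1\}$ and $\pr_{\O(n)}(\Hol(M,g))=\{1\}$, so the full holonomy is already contained in the translation subgroup $\R^n\subset P$.

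A direct curvature calculation in this frame shows that the only non-vanishing curvature terms are
\[
R(\partial_u, \partial_i)\partial_j = -(\partial_i\partial_j f)\,\partial_v, \qquad R(\partial_u, \partial_i)(\partial_u - f\partial_v) = \sum_k (\partial_i\partial_k f)\,\partial_k,
\]
so each operator $R(\partial_u,\partial_i)$ has precisely the matrix form of an $E_{y_i}$ appearing in the proposition, with $y_i = -(\partial_i\partial_1 f,\ldots,\partial_i\partial_n f)$. The matrices $E_y$ are nilpotent of order two and a short computation (using that $y^t y' = {y'}^t y$ is a scalar) gives $E_y E_{y'} = E_{y'} E_y$, so the claimed subgroup $\R^n$ is abelian; this proves the first assertion. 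When $(\partial_i\partial_j f)$ is non-degenerate at some $p_0$, the $n$ vectors $y_i$ are linearly independent, hence the curvature endomorphisms at $p_0$ already span the Lie algebra of $\R^n$; by Ambrose-Singer \eqref{ambrose} combined with the inclusion above, the full holonomy equals $\R^n$.

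For the converse, assume $(M,g)$ has holonomy exactly equal to the specified subgroup $\R^n\subset P$. The holonomy principle then produces a globally parallel null vector field $V$ (from the $\R^n$-fixed vector $\ell$) and a parallel flat frame $(E_1,\ldots,E_n)$ of $\cS$ (because $\R^n$ acts trivially on $\R\ell^{\perp}/\R\ell$). Since $V$ is parallel and null, the dual $1$-form $g(V,\cdot)$ is closed, so locally equals $du$ for some function $u$ with $V(u)=0$. Walker's classical coordinate construction for a parallel null line, refined using the parallel $V$ and the parallel screen frame, then yields local coordinates $(v,u,x^1,\ldots,x^n)$ in which $V=\partial_v$, $g(\partial_i,\partial_j)=\delta_{ij}$ and $g(\partial_v,\partial_u)=1$; any off-diagonal terms $g(\partial_u,\partial_i)$ can be eliminated by a gauge change $v\mapsto v-\varphi(u,x)$, the required potential $\varphi$ existing because the parallelism of $E_1,\ldots,E_n$ forces the relevant $1$-forms on each slice $\{u=\mathrm{const}\}$ to be closed. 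The metric then takes the pp-wave form \eqref{ppmetric} and the condition $\partial_v f = 0$ follows from $\nabla\partial_v = 0$.

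The main technical obstacle is the converse, specifically the Walker-type coordinate construction and the elimination of the off-diagonal $du\,dx^i$ terms purely from the holonomy data; the forward direction reduces to an explicit curvature calculation and a routine application of Ambrose-Singer combined with the screen-bundle proposition.
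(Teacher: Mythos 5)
Your proposal is correct and takes essentially the same route as the paper: the forward direction is just the specialisation to flat $h$ and $\del_v f=0$ of the connection and curvature computations in the proof of Proposition~\ref{satz-realisation-standard}, which give containment of the holonomy in the abelian group $\R^n$ (via the parallel field $\del_v$ and the parallel screen frame) and, by Ambrose--Singer, equality once the Hessian is non-degenerate at a point. For the converse the paper merely cites \cite{leistner01}; your Walker-coordinate sketch is precisely the argument carried out there, so the only (minor) caveat is that this part of your write-up, like the paper's, remains a plan rather than a full proof.
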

\begin{proof} pp-waves
are a special case of the manifolds considered in
Proposition \ref{satz-realisation-standard}. Hence the first
statement follows from the proof of Proposition
\ref{satz-realisation-standard}. For the converse statement see
\cite{leistner01}. \end{proof}
The formula (\ref{parallel-general-wave}) in the proof of
Proposition \ref{satz-realisation-standard} shows in addition that
for a curve $\delta(t)=(v(t),u(t),\gamma(t))$ in a pp-wave, the parallel
transport along $\delta$ is given by matrices of the form
\begin{eqnarray} P_{\delta} = \left(%
\begin{array}{ccc}
  1 & * & * \\
  0 & \1_n & * \\
  0 & 0 & 1 \\
\end{array}%
\right).   \label{transport-pp} \end{eqnarray}

\subsection{Lorentzian symmetric spaces}
As a first class of manifolds that are covered by a pp-wave, we consider Lorentzian symmetric
spaces. Let $(M,g)$ be an indecomposable Lorentzian symmetric space
of dimension $n+2 \geq 3$. Then its transvection group $G(M)$ is
either solvable or semi-simple \cite{Cahen-Wallach:70}. In the
latter case, $(M,g)$ is a space of constant sectional curvature
$\kappa \not =0$, hence its holonomy group acts irreducibly. The
case of solvable transvection group was described by Cahen and
Wallach in \cite{Cahen-Wallach:70,Cahen-Parker:80}, see also
\cite{Neukirchner:03}. The simply-connected models are given by the
following special pp-waves: Let $\lu =( \lambda_1, \ldots,
\lambda_{n})$ be an $n$-tupel of real numbers $\lambda_j \in {\R}
\backslash \{0\}$. Then the pp-waves $M_{\lu}:= ( {\R}^{n+2},
g_{\lu} )$, where
\[
 g_{\lu}:= 2\d v\,\d u +
\sum\limits^{n}_{j=1} \lambda_j (x^j)^2 \,\d u^2 + \sum\limits^{n}_{j=1}
(\d x^j)^2,
 \]
are symmetric. If $\lu_{\pi} =( \lambda_{\pi(1)}, \ldots ,
\lambda_{\pi (n)} )$ is a permutation of $\lu$ and $c >0$, then
$M_{\lu}$ is isometric to $M_{c\lu_{\pi}}$. By Proposition
\ref{satz-hol-pp} the holonomy group of $M_{\lu}$ is abelian and
isomorphic to $\R^n$.

Any indecomposable solvable Lorentzian symmetric space $(M,g)$ of
dimension $n+2\geq 3$ is isometric to a quotient $\,M_{\lu}/\Gamma$,
where $\lu \in ({\R} \backslash \{ 0 \})^{n}$ and $\Gamma$ is a
discrete subgroup of the centralizer $Z_{I(M_{\lu})} (G(M_{\lu}))$
of the transvection group $G(M_{\lu})$ in the isometry group
$I(M_{\lu})$ of $M_{\lu}$. For the centralizer
$\,Z_{\lu}:=Z_{I(M_{\lu})} (G(M_{\lu}))\,$ the following is known
(see \cite{Cahen-Kerbrat:78}):
\begin{enumerate}
\item[1)]
If there is a positive $\lambda_i$ or if there are two numbers
$\lambda_i, \lambda_j$ such that $\frac{\lambda_i}{\lambda_j}
\not\in \{ q^2 \mid q\in \QQ\}$, then $\,Z_{\lu}= \{ t_{\alpha} \mid
\alpha \in \R\}$, where $t_{\alpha}$ is the translation
$t_{\alpha}(v,x,u)=(v+ \alpha,u, x)$.
\item[2)]
If $\lambda_i =- k^2_i$ and $\,\frac{k_i}{k_j} \in {\QQ}\,$ for all
$i,j \in \{ 1, \ldots , n\}$, then $Z_{\lu}$ is generated by $\{
t_{\alpha} \mid \alpha \in \R\}$ and the isometry
\[
\varphi_{\beta}(v,u,x) =(v,  u + \beta\pi, (-1)^{\beta k_1} x^1, \ldots ,
(-1)^{\beta k_{n}} x^{n} ),
 \]
where $\beta:= \min \{ r \in \R^+ \, \mid \, r k_i \in \Z \; \mbox{
for all} \;  i\in \{1,\ldots,n\}\, \}$.
\end{enumerate}

Clearly, any discrete subgroup in $Z_{\underline{\lambda}}$ is
generated by a translation $t_\alpha$ in the $v$-component and a
power of $\vf_\beta$. Hence, denote by $\Gamma_{m,\alpha}$ the
discrete group of isometries generated by a translation $t_{\alpha}$
and by the isometry $\varphi_{\beta}^m$ with $\alpha \in \R$ and
$m\in \N_0$ ($m=0$ in the first case). $\Gamma_{m,\alpha}$ is
obviously properly discontinuous.
 Consequently, any indecomposable solvable Lorentzian
symmetric space $M$ is isometric to $M_{\lu}/\Gamma_{m,\alpha}$ for
an appropriate $m$ and $\alpha$.

\begin{satz} \label{satz-fullhol-symm}
Let $M_{\lu}/\Gamma_{m,\alpha}$ be an indecomposable solvable
Lorentzian symmetric space. Then the full holonomy group is given by
\[ \Hol(M_{\lu}/\Gamma_{m,\alpha})
 \simeq \left\{ \begin{array}{ll} \R^n & \;\; \;\;  \mbox{if $m$ even,}
 \\
                             \Z_2 \ltimes \R^n = \{Id, S_m\} \ltimes \R^n
                             & \;\; \;\;  \mbox{if $m$ odd,}
 \end{array}\right.\]
where in case of $m$ odd $S_m$ is the reflection $\,S_m =
diag((-1)^{m\beta k_1},\ldots,(-1)^{m\beta k_n}) \in\O(n)$. In
particular, any indecomposable solvable Lorentzian symmetric space
is time-orientable and admits a global parallel lightlike vector
field.
\end{satz}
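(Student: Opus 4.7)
The plan is to apply Theorem~\ref{method} to the semi-Riemannian covering $\pi: M_{\lu} \to M_{\lu}/\Gamma_{m,\alpha}$. The Cahen--Wallach metric $g_{\lu}$ is of the form $2\d v\,\d u + 2f\,\d u^2 + h$ with flat Euclidean $h=\sum(\d x^j)^2$ on $N=\R^n$ and $f=\frac{1}{2}\sum \lambda_j (x^j)^2$ independent of $v$, so Theorem~\ref{method} applies and by Proposition~\ref{satz-hol-pp} the holonomy of $(M_{\lu},g_{\lu})$ is $\Hol(N,h)\ltimes\rrn = \rrn$. It then suffices to determine the image of the map $\Phi:\Gamma_{m,\alpha}\to \Hol(M_{\lu}/\Gamma_{m,\alpha})/\rrn$ on the two generators $t_{\alpha}$ and $\varphi_{\beta}^{m}$ via the representative $\hat\phi$ from Theorem~\ref{method}.

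For the pure translation $t_{\alpha}(v,u,x)=(v+\alpha,u,x)$ we read off $a_{\sigma}=1$ and $\nu_{\sigma}=\mathrm{Id}_N$; since $h$ is flat the factor $P^{h}_{\sigma}$ is the identity under the canonical identification of tangent spaces at different points of $\R^n$. Hence $\hat\phi(t_{\alpha})=\mathrm{Id}$, so $\Phi(t_{\alpha})$ is trivial. For $\sigma=\varphi_{\beta}^{m}$ we read off $a_{\sigma}=1$, $b_{\sigma}=m\beta\pi$, and $\nu_{\sigma}(v,u,x)=S_m x$ with $S_m=\mathrm{diag}((-1)^{m\beta k_1},\ldots,(-1)^{m\beta k_n})\in\O(n)$. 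Since $S_m^{2}=\mathrm{Id}$, we have $\nu_{\sigma^{-1}}^{0}=S_m$ with $(d\nu_{\sigma^{-1}}^{0}|_p)^{-1}=S_m$, and flatness of $h$ again yields $P^h_{\sigma}=\mathrm{Id}$. Consequently
\[
\hat\phi(\varphi_{\beta}^{m}) \;=\; \begin{pmatrix} 1 & 0 & 0 \\ 0 & S_m & 0 \\ 0 & 0 & 1 \end{pmatrix}.
\]

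Next I would analyse $S_m$ case by case. If $m$ is even then every $m\beta k_i$ is an even integer (since $\beta k_i\in\Z$), so $S_m=\mathrm{Id}$ and $\hat\phi(\varphi_{\beta}^{m})\in\rrn$. If $m$ is odd, the parity of $m\beta k_i$ equals that of $\beta k_i$; by the minimality of $\beta=\min\{r\in\R^{+}\mid rk_i\in\Z\text{ for all }i\}$, not all $\beta k_i$ can be even (otherwise $\beta/2$ would satisfy the same integrality condition), so $S_m$ is a non-trivial reflection and $S_m^{2}=\mathrm{Id}$ generates a $\Z_2$ in $\O(n)$. Together with formula~\eqref{methodhol} of Theorem~\ref{method} this yields
\[
\Hol(M_{\lu}/\Gamma_{m,\alpha}) \;=\; \begin{cases} \rrn & \text{if $m$ is even,}\\ \{\mathrm{Id},S_m\}\ltimes\rrn & \text{if $m$ is odd,}\end{cases}
\]
which is the claimed description.

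Finally, the time-orientability and the existence of a global parallel lightlike vector field follow painlessly from the preceding Proposition on the screen bundle: since $a_{\sigma}=1$ for every generator, $\pr_{\R^{*}}(\Hol(M_{\lu}/\Gamma_{m,\alpha}))=\{1\}$, so by parts (2) and (3) of that Proposition the null line bundle $\cL$ is time-orientable and admits a global parallel section; alternatively one observes directly that $\del_v$ is parallel on $M_{\lu}$ and invariant under both $t_{\alpha}$ and $\varphi_{\beta}^{m}$, hence descends to the quotient. The main technical point in the argument is the minimality argument for $\beta$ ensuring that $S_m\neq\mathrm{Id}$ for odd $m$; the computation of $\hat\phi$ is otherwise a direct application of Theorem~\ref{method}.
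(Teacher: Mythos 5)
Your proof is correct and follows essentially the same route as the paper: apply Theorem~\ref{method} (formula~\eqref{methodhol}) to the covering $M_{\lu}\to M_{\lu}/\Gamma_{m,\alpha}$ and compute the representatives $\hat\phi$ on the two generators $t_{\alpha}$ and $\varphi_{\beta}^{m}$, which reduces to the differentials of these isometries because the parallel transport in the pp-wave $M_{\lu}$ has trivial $\R^*$- and $\O(n)$-parts. Your minimality argument for $\beta$, showing that $S_m\neq\mathrm{Id}$ when $m$ is odd, is a worthwhile detail that the paper's proof leaves implicit.
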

\begin{proof}
In order to determine the holonomy group of $M$
we apply formula~(\ref{methodhol}) of Theorem \ref{method}. As the
parallel transport in $M_{\lu}$ is of the form \eqref{transport-pp},
we only have to compute the differentials of the isometries in
$\Gamma_{m,\alpha}$. But these differentials are clearly given by
the identity or by
\[ d\vf_\beta^m=
\left(
\begin{array}{ccc}
  1 & * & * \\
  0 & S_m & * \\
  0 & 0 & 1 \\
\end{array}
\right).\] This proves the Proposition.
\end{proof}

\subsection{Quotients of pp-waves}
Now, by applying Theorem~\ref{method} and Corollary~\ref{method-folge} we construct two types of quotients of pp-waves for which the
full holonomy group has also elements in the dilatation part $\R^*$
of $\mbox{Stab}_{\O(1,n+1)}(L)$.
We fix linear independent vectors $(a_1,\ldots,a_p)$ in $\R^n$ and
consider the discrete group of translations of $\R^n$, \[\Gamma_{(a_1,\ldots,a_p)}:=\{ t_a  \mid
 a =  \sum_{i=1}^p m_ia_i, \,  m_i\in \Z \},\]
 which is generated by the translations by the $a_i$.
Furthermore, we
fix a $\Gamma_{(a_1,\ldots,a_p)}$-invariant function $f\in
C^{\infty}(\R^n)$ satsifying the property \eqref{hess}.

In the first example we consider the metric
\[ \tg^{f} = 2 dvdu + 2f(x)du^2 + \sums_{i=1}^n (dx^i)^2 \]
on $\tM=\rr^2\times \rr^n=\R^{n+2}$. Then, for $ t_a \in \Gamma$
with $a=\sum_{i=1}^p m_ia_i$ we define a isometry $\varphi_{a}$ of
$(\tem,g^f)$  by
\[ \varphi_{a}(v,u,x) := ((-1)^{\sum_i m_i}v,
(-1)^{\sum_i m_i}u , x+a). \] and apply Theorem \ref{method} to the
properly discontinuous group \[\Gamma:=\{ \varphi_{a} \mid t_a\in
\Gamma_{(a_1,\ldots,a_p)} \} \simeq \Z^p. \]

    Then, by Corollary ~\ref{method-folge}, the full holonomy group of the Lorentzian manifold $M= \R^{n+2}/\Gamma$ with the
    Lorentzian metric induced by $\wt g^{f}$ is given by
    \begin{equation*}
        \left\{  \begin{pmatrix}\pm 1 & y^t &\ast \\ 0& \1_n &\ast \\0&0&\pm 1 \end{pmatrix} \;\; \Big| \;\; y\in \rr^n\right\}
         \simeq \Z_2\ltimes \rr^n,
    \end{equation*}
    whereas the restricted holonomy is given by $\rr^n$. In particular,
    $M$ is not time-oriented and admits a parallel lightlike line, but no parallel
    lightlike vector field.

In order to construct an example which is time-orientable we involve
Lorentz boosts on $\rr^{1,1}$ in this construction and consider now
the pp-wave metric defined by $\frac{1}{u^2}{f}(x)$ on the
half-space $\tem:=\Omega\times\rrn$ with $\Omega=\{(v,u)\in
\rr^{2}\mid u>0\} \subset \R^{n+2}$, i.e.,
\begin{equation}\label{3zmetric}
    \tilde{g}^{f/u^2} = 2d v d u + \frac{2}{u^2}{f}(x)d u^2 + (d x^{1})^2 + \ldots + (\der x^{n})^{2}.
\end{equation}
Let $\lambda_{1},\ldots,\lambda_{p} \in \R$ be linearly independent
over $\mathbb{Q}$ and define for $t_a\in \Gamma$, $a= \sum_im_ia_i$,
the isometries
\begin{equation*}
    \psi_{a}(v,u,x) := (\e^{\sum_i m_i\lambda_{i}}v  ,\e^{-\sum_i m_i\lambda_{i}}u, x+a).
\end{equation*}
on $(\tM,\tg^{f/u^2})$. The group $\Gamma := \{ \psi_a \mid t_a\in
\Gamma_{(a_1,\ldots,a_p)} \}\simeq \Z^p$  is of quotient type, hence
properly discontinuous, and $\tilde{g}^{f/u^2}$ induces a Lorentzian
metric on the quotient $M:=\tem/\Gamma$.
Since $\lambda_1,\ldots,\lambda_p$ are independent over $\QQ$, the
epimorphism  $F: \pi_{1}(M) \twoheadrightarrow \Hol_p({\cal L},
\nabla^{\cL})$ is injective. Hence, the full holonomy group of the
Lorentzian manifold
    $M=\tem/\Gamma$
     is given by
    \begin{equation*}
         \left\{ \begin{pmatrix}
                                \e^{\sum_{j}{k_{j}\lambda_{j}}} & y^t &\ast\\
                                0& 1 &\ast \\
                                0&0&\e^{-\sum_{j}{k_{j}\lambda_{j}}}
                        \end{pmatrix} \;\; \Big| \;\;  y \in \rr^{n}, k_{j} \in \Z\right\}\simeq \Z^{p} \ltimes \rr^{n},
    \end{equation*}
    whereas the restricted holonomy group is given by $\rr^{n}$. In particular, the manifold $M$
    is time-orientable admitting a parallel null line, but no
    parallel lightlike vector field.

\subsection{Coupled holonomy}
The aim of this section is to modify the examples of the previous section in a way so that the
holonomy group features a coupling between the linear part in $\O(n)$ and the scaling component
in $\rr^*$ in the sense that some group elements act simultaneously on $\rr$ and on $\rr^n$.
The first examples are direct consequences of Corollary~\ref{method-folge}.

First we produce 4-dimensional examples. The first class of examples
is based on 2-dimensional flat Riemannian space-forms which are
either diffeomorphic to a M\"obius strip or to a Kleinian bottle
(for both cases, see \cite[Chap. 2.2.5]{wolf}). In case of the
M\"obius strip, this manifold is given by $\R^2/\Gamma^1$, where
$\Gamma^1$ is infinite cyclic generated by $\gamma=(B,t_b) \in
\O(2)\ltimes \R^2$, with a reflection  $B$ fixing the non-zero
vector $b\in \rr^2$. In case of the Kleinian bottle this manifold is
given by $\R^2/\Gamma^2$, where $\Gamma^2$ is generated by
$\gamma_1=(B,t_b)$ and $\gamma_2=(I,t_a)$ in $\O(2)\ltimes \R^2$,
and $B$ is again a reflection with $B(b)=b\not = 0$ and
$B(a)=-a\not=0$. For the construction of pp-waves,  we fix two
$\Gamma^i$-invariant functions $f_i$ on $\R^2$ with a non-degenerate
Hessian.

Consider the pp-wave metrics
\[ \tg^{f_i} := 2dvdu + {2} f_i(x^1,x^2) du^2 + (dx^1)^2 + (dx^2)^2\]
on $\tem:=\rr^4$
and
\[ \tg^{f_i/u^2} := 2dvdu + \frac{2}{u^2} f_i(x^1,x^2) du^2 + (dx^1)^2 + (dx^2)^2\]
on $\tM:= \{(v,u,x^1,x^2) \mid u>0 \}$, both
 with holonomy $\rr^2$. We fix numbers  $m,m_1,m_2\in \N$ and $ \lambda,\lambda_1,\lambda_2$ in $\rr$ and define
 the groups of isometries $\Gamma^1_m$, $\Gamma^1_\lambda$, $\Gamma^2_{m_1,m_2} $ and $\Gamma^2_{\lambda_1,\lambda_2}$
 as in Corollary~\ref{method-folge}, with respect to these numbers and the fixed generators $\gamma$ of $\Gamma^1$
 and $\gamma_1$ and $\gamma_2$ of $\Gamma^2$. These groups are of
 quotient type.
 Then, by Corollary~\ref{method-folge} we obtain the following holonomy groups for the quotients
\be
 \Hol(\tM/\Gamma^1_{m},g^{f^1})& = &\left \{ \left(%
\begin{array}{ccc}
  (-1)^{km}  & w^t & * \\
  0 & B^k & * \\
  0 & 0 & (-1)^{km} \\
\end{array}%
\right) \;  \Big| \; w\in \rr^2, k\in \Z \right\},
\\
 \Hol(\tM/\Gamma^2_{m_1m_2},g^{f^2}) &= &
 \left \{ \left(%
\begin{array}{ccc}
  (-1)^{k_1m_1+k_2m_2}  & w^t & * \\
  0 & B^{k_1} & * \\
  0 & 0 &   (-1)^{k_1m_1+k_2m_2} \\
\end{array}%
\right)  \; \Big| \;w\in \rr^2, k_1,k_2\in \Z \right\},
\\
\Hol(\tM/\Gamma^1_{\lambda},g^{f^1/u^2}) &=& \left \{ \left(%
\begin{array}{ccc}
\e^{k\lambda}  & w^t & * \\
  0 & B^k & * \\
  0 & 0 & \e^{-k\lambda} \\
\end{array}%
\right) \; \Big| \;w\in \rr^2, k\in \Z \right\}, \\
 \Hol(\tM/\Gamma^2_{\lambda_1\lambda_2},g^{f^2/u^2}) &=& \left \{ \left(%
\begin{array}{ccc}
\e^{k_1\lambda_1+k_2\lambda_2}  & w^t & * \\
  0 & B^{k_1} & * \\
  0 & 0 &   \e^{-k_1\lambda_1-k_2\lambda_2} \\
\end{array}%
\right) \; \Big| \; w\in \rr^2, k_1,k_2\in \Z \right\}.
\ee
Thus, by
different choices of the numbers $m,m_1,m_2,
$ and  $
\lambda,\lambda_1,\lambda_2$
 we can realize the groups $\Z_2\ltimes
\R^2$,  $(\Z_2\oplus \Z_2)\ltimes \R^2$, $\Z\ltimes \R^2$, and
$(\Z\oplus\Z) \ltimes \R^2$, where the integer parts are immersed
into $(\R^*\times \O(2))$ coupling both factors, as holonomy groups
of 4-dimensional not time-orientable Lorentzian manifolds.

Now we start the construction with a group of isometries of $\R^2$
that is not properly discontinuous.
  Fix an
angle $\theta$ and consider the group $\Gamma_\theta$ generated by
the rotation $D_\theta$ by $\theta$. This group is isomorphic to
$\Z$ or $\Z_p$ depending on whether $\theta$ is a rational multiple
of $\pi$ or not. We fix a rotational invariant function $f$ on
$\rr^2$ with property \eqref{hess} and consider again the pp-wave
metric
\[g^{f/u^2}=2dvdu +\frac{2}{u^2}f+(dx^1)^2+(dx^2)^2
\]
on $\tem= \Omega\times \rr^2$ with $\Omega=\{(v,u)\in \rr^2\mid
u>0\}$. Then, for $\lambda, c \in \rr$,  the group $\Gamma$
generated by
\[\varphi (v,u,x):=(\e^{\lambda}v+c, \e^{-\lambda}u, D_\theta(x))
\] acts on $\tem$ by isometries of $g^{f/u^2}$ and is properly discontinuous. Indeed,
since we restrict the action to $\{u>0\}$ the group of
diffeomorphisms on $\Omega$ generated by
\[(v,u)\mapsto ( \e^{\lambda}v+c, \e^{-\lambda}u)
\] is of quotient type and therefore properly discontinuous. Using this, we obtain that $\Gamma$
is of quotient type as well and Lemma 2 applies again. Hence,
$\tem/\Gamma$ is a Lorentzian manifold with holonomy $\Z\ltimes
\rrn$ if $\lambda\not=0$, i.e.,
\[
\Hol(\tem/\Gamma,g^{f/u^2})
=
 \left \{ \left(%
\begin{array}{ccc}
\e^{k\lambda}  & w^t & * \\
  0 & D_\theta^k & * \\
  0 & 0 & \e^{-k\lambda} \\
\end{array}%
\right) \; \; \Big| \;\; w\in \rr^2, k\in \Z \right\}. \] In a
similar way we can construct non-time-orientable Lorentzian
manifolds by taking $\Omega=\rr^2\setminus \{(0,0)\}$ and
$\varphi(v,u,x):=(-v,-u,D_\theta)$. Here the holonomy $\Z\ltimes
\rrn$ or $\Z_p\ltimes \rrn$ acts as $\Z_2$ in the $\rr^*$ part.

\bbem Note that, if in the prevous example we assume $\theta \not = \pi$ , the orthogonal part
$\pr_{\O(2)}(\Hol(\tem/\Gamma,g^{f/u^2}))\simeq \Gamma_{\theta}$
cannot be realised as holonomy group of a complete Riemannian
manifold.
If there was a complete
2-dimensional Riemannian manifold $M^2$ with holonomy
$\Gamma_\theta$, then $M^2$ would be a flat Euclidean space-form.
But the holonomy groups of flat 2-dimensional space-forms are known
to be trivial (for the plane, the cylinder and the torus) or
generated by a reflection (for the M\"obius strip and the Kleinian
bottle) (see \cite{wolf}, Chap. 2.2.5). We do not know if this is
also true if we drop the assumption of completeness.

This is in contrast to the situation when considering only the connected component of the holonomy group.
Here it was proven in \cite{leistnerjdg} that $\pr_{\SO(n)}(\Hol^0)$ is always the connected holonomy
of a Riemannian manifold, and, by results in Riemannian holonomy theory (an overview can be found in
\cite{joyce07}), can be realised as holonomy of a {\em complete} Riemannian manifold.
\ebem

In the same way, 5-dimensional examples can be constructed starting
with flat 3-dimensional Riemannian space-forms. These are classified
(see \cite[Chap. 3.3.5]{wolf}) and one can easily single out those
which are generated by discrete groups $\Gamma$ with non-trivial
linear part $L_{\Gamma}$. Then, similar constructions as above give
various examples where the $\R^*$ and the $\O(3)$ part of the
holonomy is coupled. We show this only for the 3-manifolds of type
$\cS^{\theta}_1$ in the list of 3-dimensional flat space-forms. Let
$\theta \in (0,2\pi)$ and let us consider the discrete group
$\Gamma\subset E(3)$ generated by $(A,t_a)\in \O(3)\ltimes \R^3$,
where $a\in \R^3$ is an eigenvector of $A$ and $A|_{a^\bot} =
D_{\theta}$ is the rotation by the angle $\theta$. We fix a function
${f} \in C^{\infty}(\R^3)$ which is $\Gamma$-invariant and has a
non-degenerate Hessian in a certain point and consider the pp-wave
metric $g^{f}$ and $g^{f/u^2}$ on the appropriate $\tem$, where we
choose the coordinates in such a way, that $a$ is a positive
multiple of $\del_1$. Let $m\in \N$ and $\lambda\in \R$ be fixed
numbers and denote by $\Gamma_{m}$ and $\Gamma_{\lambda}$  the
properly discontinuous group of isometries on $\tM$ as in
Corollary~\ref{method-folge}. Then the holonomy group of the
quotients is given by \be
 \Hol(\tM/\Gamma_{m},g^{f})& = &\left\{ \left(%
\begin{array}{cccc}
 (-1)^{km}& s & w^t & * \\
  0 & 1 & 0 & *\\
  0 & 0 & D_{k\theta} & * \\
  0 & 0 & 0 & (-1)^{km} \\
\end{array}%
\right) \; \Big| \; k\in \Z, s\in \rr, w\in \rr^2  \right\},
\\
 \Hol(\tM/\Gamma_{\lambda},g^{f/u^2})& = &\left\{ \left(%
\begin{array}{cccc}
 e^{k\lambda}& s & w^t & * \\
  0 & 1 & 0 & *\\
  0 & 0 & D_{k\theta} & * \\
  0 & 0 & 0 & e^{-k\lambda} \\
\end{array}%
\right) \; \Big| \; k\in \Z, s\in \rr, w\in \rr^2  \right\}. \ee
Again, by
appropriate choice of $\lambda$ and $\theta$ we can realise the
groups  $\Z \ltimes \R^3$ and $\Z_{2q} \ltimes \R^3$ as holonomy group of a 5-dimensional Lorentzian manifold,
where
$\Z$ and $\Z_{2q}$ are immersed into $\R^*\times \O(3)$ coupling both factors.

\subsection{An infinitely generated holonomy group}
Again by applying Corollary~\ref{method-folge} we will give an example of a 4-dimensional Lorentzian manifold
 with infinitely generated holonomy group arising as a quotient of a pp-wave.
Consider $\rr^2$ with the flat metric $h_0=(d x^1)^2+ (d x^2)^2$ and
restrict $h_0$ to the manifold $N:=\rr^2\setminus \Z^2$. The
fundamental group $\Gamma:=\pi_1N$ of $N$ is a free group infinitely
generated by the loops going around the holes,
 but the holonomy
group of $(N,h_0)$ is trivial.
 Fix a function $f$ on $N$ satisfying \eqref{hess}.  The universal cover of $N$ is $\rr^2$ and
  $N$ is the quotient of $\rr^2$ by $\pi_1N$. It is equipped with the $\Gamma$-invariant
   pull-back $h$ of the flat metric $h_0$ on $N$. Also the function $f$ pulls back to a $\Gamma$-invariant function on $\rr^2$.
We apply Corollary~\ref{method-folge} to $(\rr^2,h)$ and $\Gamma$.

Set $\Omega :=\{(v,u)\in \rr^2\mid u>0\}$ and define a Lorentzian metric on $\tem:=\Omega \times \rr^2$ by the usual procedure
\[\wt g^{f/u^2,h}=2dvdu+\frac{2}{u^2}f d u+ h.\]
Now we fix generators $( \gamma_1, \gamma_2, \ldots\  )$ of $\Gamma$
and real numbers $\underline{\lambda}:=( \lambda_1, \lambda_2,
\ldots \ )$ which are linearly independent over $\mathbb{Q}$ and
define the following isometries of $(\tem,g^{f/u^2,h})$:
\[\varphi_i(v,u,x)=(\e^{\lambda_i}v, \e^{-\lambda_i}u, \gamma_i(x)).\]
Let $\Gamma_{\underline{\lambda}}$ be the group of isometries of
$(\tem, \wt g^{f/u^2,h})$ that is generated by the $\varphi_i$ for
$i=1,2,\ldots\ $. Since the fundamental group $\Gamma=\pi_1N$ acts
as group of deck transformations of the universal covering $\pi:
\R^2 \to N$, it is properly discontinuous. Moreover, since $\Gamma$
is a free group, the sections
$(\Gamma_{\underline{\lambda}})_{\sigma} \subset
(\Gamma_{\underline{\lambda}})_{1}$ consist only of one element for
all $\sigma \in \Gamma$. Hence, $\Gamma_{\underline{\lambda}}$ is of
quotient type and by Lemma \ref{properlemma} properly discontinuous,
and the quotient $M=\tem/\Gamma_{\underline{\lambda}}$ is a smooth
Lorentzian manifold with the induced Lorentzian metric
$g^{f/u^2,h}$. Then Corollary~\ref{method-folge} shows that the
holonomy group of $(M,g^{f/u^2,h})$ is generated by the following
matrices
\[\begin{pmatrix}
\e^{\lambda_i}&w^t&*\\
0&\1_2&*\\
0&0&\e^{-\lambda_i}
\end{pmatrix} \in \O(1,3),
\]
with $w\in \rr^2$ and $\lambda_i$ one of the fixed real numbers.
Since these were chosen linearly independent over $\QQ$, the
quotient $\Hol_p(M)/\Hol(\tem)=\Hol_p(M)/\Hol^0(M)$ and also the holonomy group of the induced connection on the null line bundle $\cal L$ is infinitely
generated.

\subsection{Examples with curved screen bundle}
Using
constructions in Riemannian geometry one obtains Lorentzian
manifolds with parallel null line, curved screen bundle and
disconnected holonomy with a coupling between the $\rr^*$ and the
$\O(n)$ part.

The first construction is based on Riemannian manifolds that go back
to Hitchin \cite{hitchin74einstein} and McInnes
\cite{mcinnes91,mcinnes93} and which are described in detail in
\cite{moroianu-semmelmann00}. Let $N$ be the complete intersection
of $m+1$ quadrics in $\C P^{2p+1}$, defined as the common zero set
of quadratic polynomials with strictly positive real coefficients,
and endowed with the Riemannian metric $h$ from $\C P^{2p+1}$.
Then $(N,h)$ is a simply
connected K\"ahler manifold. On $N$ we have an involution defined by
complex conjugation,
\[\s ([z_0,\ldots ,z_{2p+1}]):= ([\ol{z}_0,\ldots ,\ol{z}_{2p+1}]),
\]
which is an anti-holomorphic isometry of $(N,h)$. Then, if $\Gamma$
denotes the group of isometries generated by $\s$, in
\cite[Corollary 9]{moroianu-semmelmann00} it is proven that
$(N/\Gamma,h)$ is a compact $2p$-dimensional Riemannian manifold
with holonomy $\Z_2\ltimes \SU(p)$.
In
addition, $(N/\Gamma,h)$ is Ricci flat and admits a parallel spinor
field. Note that $\Phi(\s)$ is represented by the
 conjugation on the tangent space with
respect to the complex structure given by the K\"ahler structure. We
denote this involution by $\s_*$.

If we fix a $\Gamma$-invariant function $f$ on $N$ and, for $m\in
\Z$ and $\lambda\in \rr$, perform both constructions  as in
Corollary~\ref{method-folge}, we obtain
$(\Z_2\ltimes\SU(p))\ltimes\rr^{2p}$ and
 $(\Z\ltimes\SU(p))\ltimes\rr^{2p}$
as holonomy groups, with $\Z_2$ and $\Z$ acting simultaneously on
$\rr$ and $\SU(p)$, i.e., \be
 \Hol(\tM/\Gamma_{m},g^{f,h})& = &\left\{ \left(%
\begin{array}{ccc}
 (-1)^{km}&  0& 0\\
  0 & \s_*^{k} & 0 \\
  0 & 0 & (-1)^{km}\\
\end{array}%
\right) \; \Big| \; k\in \Z, \right\}\cdot (\SU(p)\ltimes \rr^{2p}),
\\
 \Hol(\tM/\Gamma_{\lambda},g^{f/u^2,h})& = &\left\{ \left(%
\begin{array}{cccc}
 \e^{k\lambda}& 0& 0 \\
0 & \s_*^{k} & 0 \\
 0 & 0 & \e^{-k\lambda} \\
\end{array}%
\right) \; \Big| \; k\in \Z\right\}\cdot(\SU(p)\ltimes \rr^{2p}).
\ee Note that both manifolds no longer admit parallel spinors
because they do not admit a parallel  vector field.
We
will consider the full holonomy groups of Lorentzian manifolds with
parallel spinors in the next section.

In \cite{wilking99} Wilking constructed a remarkable example of a
compact Riemannian manifold with non-compact holonomy group. The
manifold is given as a quotient of a $5$-dimensional solvable Lie
group $S:=\rr^4\rtimes \rr$ with a left-invariant Riemannian metric
by a cocompact subgroup $\Gamma:=\Lambda\rtimes\Z $ where $\Lambda$
is a certain lattice of $\rr^4$. Then $S/\Gamma$ has restricted
holonomy $\SO(3)\subset \SO(5)$, acting trivially on a
$2$-dimensional subspace of $T_pS$, and $\SO(3)\times \Z$ as full
holonomy group.
 Again, by the constructions in Corollary~\ref{method-folge} we can produce $7$-Lorentzian manifolds with parallel null line
 and full holonomy  $(\Z\times \SO(3))\ltimes \rr^5$ where $\Z$ acts on $\rr$ and $\SO(3)$ simultaneously, either as $\Z_2$ or as $\Z$.

 In the same way as described in Remark~\ref{compact-remark} we can even construct compact
 Lorentzian manifolds of the form $(S^1\times S^1)/{\Z_2}\times S/\Gamma$ with indecomposable holonomy
 $(\Z_2\times \Z\times \SO(3))\ltimes \rr^5$
 or of the form $(S^1\times S^1\times S)/\Gamma_{\underline{m}}$, where $\Gamma_{\underline{m}}$
 is generated as in formula~\eqref{compact-coupled}, and with holonomy contained in $(\Z_2\times \Z\times \SO(3))\ltimes \rr^5$,
 but possible with non-trivial $\Z_2\subset \rr^*$-part which is  now coupled to $\Z\subset \O(5)$.

\section{Full holonomy groups of Lorentzian manifolds with parallel null spinor}
\label{section-hol-par}

A Lorentzian spin manifold $(M,g)$ of dimension $n+2>2$ is a time-
and space-oriented Lorentzian manifold with a spin structure
$\widetilde{\cal O} (M,g)\to \cal \O(M,g)$ that is a reduction with
respect to the double cover $ \Lambda:\Spin^0(1,n+1)\to
\SO^0(1,n+1)$, where $\cal \O(M,g)$ is the bundle of time- and
space-oriented frames over $M$ orthonormal for $g$. This allows us
to write the tangent bundle as
\[TM=\cal \O(M,g)\times_{\SO^0(1,n+1)}\rr^{1,n+1}=\widetilde{\cal O}(M,g)\times_{\Spin^0(1,n+1)}\rr^{1,n+1},\]
and  defines the spinor bundle
\[\Sigma=\widetilde{\cal O}(M,g)\times_{\Spin^0(1,n+1)}\Delta^{1,n+1},\]
where $\Delta^{1,n+1}$ is the spinor module. The spinor bundle is
equipped with a metric of neutral signature, a Clifford
multiplication
\[\cdot :TM\times \Sigma \to \Sigma,\]
and a covariant derivative $\nabla^\Sigma$. All these structures
are compatible with each other (for details see for example
\cite{baum81}).

Every section in the spinor bundle $\vf\in \Gamma(\Sigma)$ defines a
causal vector field $V_\vf\in \Gamma(TM)$ via
\[g(V_\vf,Y)= - \la Y\cdot \vf,\vf\ra.\]
Both, $\vf $ and $V_\vf$ have the same zero set, and the spinor
field  is of zero length, a {\em null spinor},  if  $V_\vf$  is of zero length, i.e. a null vector field. If the
spinor bundle admits a section $\vf$ with $\nabla^\Sigma \vf=0$, for
short, a parallel spinor, then $V_\vf$ is a parallel vector field as
well.

Since a Lorentzian spin manifold is assumed to be time- and
space-oriented, its full holonomy $H:=\Hol_p(M,g)$ is contained in
the connected component $\SO^0(1,n+1)$ of $\O(1,n+1)$, but is not
necessarily connected itself.  Using Proposition \ref{types} and results
in \cite{leistnerjdg}  and  \cite{wang89} we can summarise what we
know so far about the holonomy group in this situation:
 \bs
 Let $(M,g)$ be a Lorentzian  spin manifold with special holonomy.  If $(M,g)$ admits a parallel spinor, then
$(M,g)$ admits a parallel null vector field and its full holonomy
group $H$ is given as
\[H=G\ltimes \rrn,\]
with $G\subset \SO(n)$, and the connected component $G^0$ of $G$ is
trivial or given as
 a direct product of some of the following groups
  \[ \SU(m),\ \Sp(k), \ \G_2,\ \Spin(7).\]
  \es
  \bprf
The parallel spinor on  $(M,g)$ induces a parallel vector field,
which, as $H=\Hol(M,g)$ acts indecomposably, has to be null. Hence,
the full holonomy group fixes a null vector,
\[\Hol_p(M,g)\subset \mathrm{Stab}_{\SO^0(T_pM)}(V_\vf(p)) \simeq \SO(n)\ltimes \rrn,\]
and the restricted holonomy must be of type 2 or 4. Using the fact
that for the coupled type 4 the group $G^0$ has a centre, in
\cite{leistnerjdg} it was shown, that the existence of such a
parallel spinor implies that the restricted holonomy  must be of uncoupled type
2,
 \[H^0=\G^0\ltimes\rrn,\]
where $G^0$ is a Riemannian holonomy group which is isomorphic to a
subgroup in $\Spin(n)$ admitting a fixed spinor. Based on Berger's
list \cite{berger55}, these groups were determined  by Wang
\cite{wang89}.
  Using Proposition \ref{types} we obtain that $H=G\ltimes \rrn$ with $G\subset \SO(n)$.
  \eprf
We will now generalize this result to the full holonomy group.

\bs \label{satz-Ex-paral-spin} Let $(M,g)$ be a Lorentzian manifold
of dimension $(n+2)>2$ which is time- and space-orientable with
indecomposable restricted holonomy group $H^0\subset ( \rr^+\times
\SO(n)) \ltimes \rrn$. Then we have the following two implications:
\bnum
\item[1)] If $(M,g)$ admits a spin structure with a parallel spinor field, then the full holonomy group $H$ is given
as $H=G\ltimes \rrn$ with $G\subset\SO(n)$ and there exists a  homomorphism $\Phi:G\to \Spin(n)$ \bnum
\item[(a)]\label{a}  with $\lambda\circ \Phi=\mathrm{Id}_G$ for $\lambda:\Spin(n)\to\SO(n) $ the twofold cover, and
\item[(b)]\label{b} there is a  spinor $w$ in the spinor module $\Delta_n$ such that $\Phi(G)w=w$.
\enum
\item[2)] If the full holonomy group is $H=G\ltimes \rrn$ with $G\subset \SO(n)$ and there is a
 homomorphism $\Phi:G\to\Spin(n)$ with (a) and
(b), then $(M,g)$ has a spin structure with a parallel spinor.\enum \es

\bprf Let $(M,g)$ be time- and space-oriented and
$H=\Hol_p(M,g)\subset \SO^0(1,n+1)$ be its full holonomy group.
 The proof relies on the following three observations:
\bnum
\item[1)] \label{ob1}
If $(M,g)$ admits a spin structure with a parallel spinor field,
then there is a homomorphism $\Psi: H\to \Spin^0(1,n+1)$ with
$\Lambda\circ \Psi=\mathrm{Id}_H$ and a  spinor $v\in\Delta_{1,n+1}$
such that $\Psi(H)v=v$.
\item[2)]\label{ob2}
If there is a homomorphism  $\Psi:H\to \Spin^0(1,n+1)$ with
$\Lambda\circ \Psi=\mathrm{Id}_H$ and a spinor $v\in\Delta_{1,n+1}$
with $\Psi(H)v=v$,
  then $(M,g)$ has a spin structure with a parallel spinor on $(M,g)$.
  \item[3)]\label{ob3} If $H=G\ltimes \rrn$, then there is a homomorphism $\Psi: H\to \Spin^0(1,n+1)$
  with $\Lambda\circ \phi=\mathrm{Id}_H$
and a  spinor $v\in\Delta_{1,n+1}$ such that $\Psi(H)v=v$ if and
only if there is a homomorphims $\Phi:G\to \Spin(n)$ with (1a) and
(1b). \enum
The first observation was made by Wang in \cite{wang95}. Indeed, if
$(M,g)$ has a parallel spinor field, the holonomy group $\tilde{H}$
of the spin connection fixes a spinor $v\in \D_{1,n+1}$ and maps
onto $H$, hence it does not contain $-1\in Spin^0(1,n+1)$.
Therefore, $\Lambda|_{\tilde{H}}: \tilde{H} \to H$ is an isomorphism
and we can define $\Psi:=(\Lambda|_{\tilde{H}})^{-1}: H \to
Spin^0(1,n+1)$.

The second observation was made by Semmelmann and Moroianu in
\cite[Lemma 5]{moroianu-semmelmann00} for Riemannian signature but
their proof works in any signature. Indeed, if $\cal H$ is the
holonomy bundle through a frame in $\cal \O(M,g)$ and $\Psi:H\to
\Spin^0(1,n+1)$ is a homomorphism with $\Lambda\circ\Psi=\mathrm{Id}$, we can define the spin structure
by $\widetilde{\cal O}(M,g):=\cal H\times_H\Spin^0(1,n+1)$ which
projects canonically onto $\cal \O(M,g)=\cal
H\times_{H}\SO^0(1,n+1)$.

We have to proof the third observation. First assume that $\Psi:
H=G\ltimes \rrn\to \Spin^0(1,n+1)$ is given. Since
$\Lambda\circ\Psi=\mathrm{Id}_H$, the restriction of $\Psi$ to $G$
maps into $\Lambda^{-1}(\SO(n))=\Spin(n)\subset \Spin^0(1,n+1)$.
Hence we can define
 \[\Phi:=\Psi|_G:G\to \Spin(n).\]
Since  $\lambda=\Lambda |_{\Spin(n)}$, we also get that $\Phi\circ
\lambda=\mathrm{Id}_G$. Now, $\rrn\subset H$ is a connected closed
Abelian subgroup. When realising $\Spin^0(1,n+1)$ in the
 Clifford algebra $ {\cal Cl}(1,n+1)$,  the image of $\rrn$ under $\Psi$  is given by
\[\Psi (\rrn) = \{1+\ell\cdot x\mid x\in \rr^n\}\subset \Spin^0(1,n+1)\subset {\cal Cl}(1,n+1) ,\]
where $(\ell, e_1, \ldots , e_n,\ell^*)$ is a basis as in
\eqref{metric} and $\rr^n=\mathrm{span}(e_1, \ldots, e_n)$. On the
other hand, if $\Phi:G\to \SO(n)$ is given, we define \be \Psi :
H=G\ltimes \rrn&\to &\Spin^0(1,n+1)
\\g\cdot x&\mapsto& \Phi(g)\cdot (1+\ell\cdot x ).\ee
One can check that $\Psi\circ \Lambda=\mathrm{Id}_H$.

It remains to verify that  there is fixed spinor $v\in
\Delta_{1,n+1}$ if, and only if, there is  a fixed spinor $w\in
\Delta_n$. To this end we consider the Clifford algebra ${\cal
Cl}(1,1)$ of the 2-dimensional space $\R\ell \oplus\R\ell^*$ with
the induced signature $(1,1)$-scalar product, fix a basis
$(u_1,u_2)$ in $\Delta_{1,1}$ satisfying $\ell \cdot u_1 = \sqrt{2}
u_2$, $\ell \cdot u_2=0$, $\ell^*\cdot u_1= 0$, $\ell^* \cdot u_2=
-\sqrt{2}u_1$ and assign to a spinor $v\in \Delta_{1,n+1}$ two
spinors $v_1, v_2\in \Delta_n$ by identifying
\be\Delta_{1,n+1}&\simeq &\Delta_n\otimes \Delta_{1,1}\\
v&\mapsto&v_1\otimes u_1+v_2\otimes u_2. \ee Then a
computation shows that $(g\cdot a )(v)=v$ for all $g\in \Psi(G)$ and
$a\in \Psi(\rrn)$ if, and only if, \be gv_2 - \sqrt{2} ( g \cdot x )
 v_1&=&v_2,
\\
g v_1&=&v_1 \ee for all $g\in \Psi(G)$ and all $x\in \rrn$. The
first equation for $g=\1$ implies that $x\cdot v_1=0$ for
all $x\in \rrn$ and hence, $v_1=0$. Thus, $v\in \Delta_{1,n+1}$ is
fixed under $\Psi(H)$ if, and only if, $v=v_2\otimes u_2$ and $v_2$ is
fixed under $\Psi(G)=\Phi(G)$. This shows observation \eqref{ob3}
and completes the proof. \eprf

\bfolg \label{Folg-Number-parspinor} Let $H=G\ltimes \rrn\subset
\SO^0(1,n+1)$ with $G\subset SO(n)$. Then the following vector
spaces have the same dimension: \bnum
\item[i)] spinors in $\D_n$ fixed under $G$,
\item[ii)] spinors in $\D_{1,n+1}$ fixed under $H$,
\item[iii)]  parallel spinors fields on a Lorentzian manifold with holonomy group $H=G\ltimes\rrn$.
\enum \efolg

Finally, what is needed to complete a proof  of Theorem \ref{theo2}
is a classification of subgroups of $\SO(n)$ that fix a spinor in
$\D_n$ and have connected component $\SU(m)$, $\Sp(k)$, $\G_2$ and
$\Spin(7)$. This result can be obtained from \cite{mcinnes91} and
\cite{wang95}.

\btheo[McInnes \cite{mcinnes91} and
Wang \cite{wang95}]\label{wangtheo} Let $G\subset \SO(n)$ be Lie
group with connected component $G^0$ equal to $\SU(m)$, $\Sp(k)$,
$\G_2$ or $\Spin(7)$ with a non-vanishing fixed spinor in
$\Delta_n$. Then $G$ is equal to one of the groups in the following
table, in which $N$ is the dimension of spinors fixed under $G$,
\vspace{2mm}
\begin{center}
\begin{tabular}{|l|l|l|l|l|}
\hline
$G^0$    &  $n$   &  $G$  &  $N$   &  \text{conditions} \\[1mm]
 \hline
 &&&&\\[-2mm]
$\SU (m)$            &    $2m$    &    $\SU(m)$    &  2     &  \\[1mm]
\cline{3-5}
 &&&&\\[-2mm]
                  &  & $\SU(m) \rtimes \Z_2$ & 1    & $m$ divisible by $4$
 \\[1mm]
 \hline
  &&&&\\[-2mm]
           &      &     $\Sp(k)$    &  $k+1$ &
           \\[1mm]
            \cline{3-5}
             &&&&\\[-2mm]
$\Sp(k)$  & $4k$  &   $\Sp(k) \times \Z_d$   & $(k+1)/d$ & $d > 1, d $
odd and divides $  k+1$
 \\[1mm]
  \cline{3-5}
   &&&&\\[-2mm]
&  & $\Sp(k) \cdot \Z_{2d}$ & $2\left\lfloor \tfrac{k}{2d}\right\rfloor +1 $& $k$ even, $1<d\le 2d$
\\[1mm]
\cline{3-5}
 &&&&\\[-2mm]
&  & $\Sp(k) \cdot Q_{4d}$ &$ \left\lfloor \tfrac{k}{2d}\right\rfloor$ if $\tfrac{k}{2}$ odd& $k$ even, $1<d\le 2d$,
\\[1mm]
\cline{4-4}
&&&$ \left\lfloor \tfrac{k}{2d}\right\rfloor +1$ if $\tfrac{k}{2}$ even&\\[1mm]
\cline{3-5}
 &&&&\\[-2mm]
&  & $\Sp(k) \cdot B_{4d}$ & see ref. \cite{wang95} & $k$ even and conditions in \cite{wang95}
\\[1mm]
\cline{3-5}
 &&&&\\[-2mm]
&  & $\Sp(k) \cdot \Gamma$ &1 & $k$ even
\\[1mm]
\hline
 &&&&\\[-2mm]
$\Spin_7$          &   8  &    $\Spin_7$        &  1     &
\\[1mm]
\hline
 &&&&\\[-2mm]
$\mathrm{G}_2$             &    7    &    $\mathrm{G}_2$        &  1     &             \\
\hline
\end{tabular}
\end{center}
\vspace{2mm}
Here
\bnum
\item $Q_{4d}$ is the double cover of the dihedral group $D_{2d}$ of order $2d$,
\item
$\Sp(k)\cdot B_{4d}$ for $d=6,12,30$, and $B_{4d}$ is the double
cover in $\Sp(1)$ of the polyhedral groups $P_{2d}$ in $\SO(3)$,
i.e. the tetrahedral group $P_{12}$, the octahedral group $P_{24}$,
and the icosahedral group $P_{60}$, and
\item
 $\Gamma$ is an infinite subgroup of $\U(1)\rtimes \Z_2$.
 \enum
\etheo
\bprf[Steps in the proof] Since $G$ is contained in the normaliser
of $G^0$ in $\O(n)$, first we need a list of normalisers of the
possible $G^0$'s. They can be found in \cite[10.114]{besse87} with a
correction made in \cite{mcinnes91} for the $\SU(m)$-case. The cases
in which $G^0$ is equal to $\G_2$ or $\Spin(7)$ are trivial, as both
groups are equal to their own normaliser in $\O(n)$. We are left
with $G^0$ being $\SU(m)$ or $Sp(k)$. Their normaliser in $\O(n)$ is
given as
$\U(m)\rtimes \Z_2$, where $\Z_2$ acts by complex conjugations, and
as
$
\Sp(k)\cdot Sp(1)$.

First assume that $G/G^0$ is finite.  In  \cite{mcinnes91}, McInnes
classified  the possible holonomy groups of compact Ricci flat
Riemannian  manifolds. Since their fundamental group is finite, as
the first, {\em purely algebraic} step in McInnes' proof, possible
subgroups $G$ in $\SU(m)\rtimes \Z_2$ and $\Sp(k)\cdot Sp(1)$ with
finite quotients $G/\SU(m)$ and $G/\Sp(k)$ are listed. For  $\SU(m)$
they are of the form
\[ \Z_{mr}\cdot \SU(m)\ \text{ or }\ \left(\Z_{mr}\cdot \SU(m)\right)\rtimes \Z_2,\]
with a positive integer $r$ and with $\Z_{mr}\in \U(1)$. For
$\Sp(k)$ the list is longer: \bnum
\item[(i)]
$\Z_r\cdot \Sp(k)$, with $r$ odd,
\item[(ii)]
$\Z_{2r}\cdot \Sp(k)$, with $r$ even,
\item[(iii)] $Q_{4d}\cdot \Sp(k)$, where $Q_{4d}$ is the double cover of the dihedral group $D_{2d}$ of order $2d$,
\item[(iv)] $B_{4d}\cdot \Sp(k)$ for $d=6,12,30$, and $B_{4d}$ is the double cover in $\Sp(1)$ of the polyhedral
groups $P_{2d}$ in $\SO(3)$, i.e. the tetrahedral group $P_{12}$,
the octahedral group $P_{24}$, and the icosahedral group $P_{60}$.
\enum Using geometric arguments McInnes shortened this list to
obtain all possible holonomy groups of {\em compact} Ricci flat
Riemannian manifolds, but since we cannot apply geometric arguments
for our purpose, we cannot use this shorter list. Instead we use
results by Wang in \cite{wang95}, where the full holonomy groups of
Riemannian manifolds --- compact and non-compact --- with parallel
spinors are classified. Although in the compact case Wang can start
from the shorter list obtained by McInnes, for the non-compact case
in \cite[Proof of Theorem 4.1]{wang95} only algebraic arguments can
be used and the full list above has to be checked for the existence
of fixed spinors. In the $\SU(m)$ case Wang shows that $r=0$, that
is, only $\SU(m)$ itself and $\SU(m)\rtimes \Z_2$ remains. In the
$\Sp(m)$ case Wang obtains the list in the table in the Theorem. \eprf

To conclude this section we consider the question whether there
exist Lorentzian manifolds with the holonomy groups in Theorem
\ref{theo2} and special causality properties. In Proposition
\ref{satz-realisation-standard} we proved that starting with a
Riemannian manifold $(N,g_N)$ with full holonomy group $G$ and a
function  $f\in \C^\infty(\R\times N)$ such that $\mathrm{det}(
\mathrm{Hess^N}(f))|_{p} \not=0$ at some point $p\in \R\times N$, we
obtain a Lorentzian manifold $M:= \rr^2 \times N$ with the metric
\begin{eqnarray} \label{general-pp-spinor}
g^{f,h} =2 \d v\d u + 2f \d u^2+ g_N \end{eqnarray}
 with full holonomy
$G\ltimes \rr^n$. Proposition \ref{satz-Ex-paral-spin} and Corollary
\ref{Folg-Number-parspinor} show, that in case of a spin manifold
$(N,g_N)$, the Lorentzian manifold $(M,g^{f,h})$ is spin as well and the
dimension of the spaces of parallel spinor fields on $(M,g^{f,h})$ and
$(N,g_N)$ are the same.  Moreover, for Lorentzian manifolds of type
$(M,g^{f,h})$ various causality properties are known (see for example
\cite{Sanchez1} and \cite{Sanchez2}). Let us quote here the
following two results.
\begin{enumerate}
\item[1)]
If $(N,g_N)$ is a complete Riemannian manifold, the function $f$
does not depend on $u$ and is at most quadratic at spacial infinity,
i.e., there exist $x_0\in N$ and real constants $r,c >0$ such that
\[ f(x) \leq c \cdot d_N(x_0,x)^2 \quad \mbox{ for all } x\in N \mbox{ with } d_N(x_0,x) \geq r,\]
then $(M,g^{f,h})$ is geodesically complete. Here $d_N$ is the
distance function of $(N,g_N)$.
\item[2)] If $(N,g_N)$ is a complete Riemannian manifold and the function
$-f$ is spacial subquadratic, i.e., there exist $x_0\in N$ and
continious functions $p, c_1, c_2 \in C(\R,[0,\infty))$ with $p(u)<
2$ such that
\[ -f(u,x) \leq c_1(u)\, d_N(x_0,x)^{p(u)} + c_2(u) \quad \mbox{ for all} \; (u,x) \in \R\times N, \]
then $(M,g^{f,h})$ is globally hyperbolic.
\end{enumerate}
Of course, both conditions for $f$ can be realized in addition to
$\mathrm{det}\big( \mathrm{Hess}^N f(u_0,x_0)\big) \not = 0$. Hence,
each of the groups in Theorem \ref{theo2} can be realized as
holonomy group of a Lorentzian manifold, and in addition, if the
group $G$ in Theorem \ref{wangtheo} is the holonomy group of a {\em
complete} Riemannian manifold, then $H$ can be realized by a
geodesically complete as well as by a globally hyperbolic Lorentzian
manifold.

If one is interested in globally hyperbolic manifolds with complete
or even compact space-like Cauchy surfaces, another construction
based on {\em Lorentzian cylinders} is useful, which from the spin
geometric point of view first was studied by B\"ar, Gauduchon and
Moroianu in \cite{baer-gauduchon-moroianu05} and further developed
in the context of special holonomy by the first author and M\"uller
in \cite{baum-mueller08}. Formulated for our situation the result
is:

\begin{satz}[\cite{baum-mueller08}] \label{Satz-Realisation-global-hyp}
Let $(N,g_N)$ be an n-dimensional irreducible Riemannian spin
manifold of dimension $n$ with parallel spinors, $(F,g_F)$ the
warped product $( F = \R \times N, g_F=ds^2 + e^{-4s}g_N )$ over
$(N,g_N)$, $\,C:TF \to TF$ a Codazzi tensor on $(F,g_F)$ with only
positive eigenvalues and $a\in \R$ a positive constant. Then the
Lorentzian manifold $(M,g^C)$ given by
\begin{eqnarray} M:=
(-a,\infty) \times \R \times N, \qquad g^C:= -dt^2 + \big(C +
2(t+a)\mathrm{Id}_{TF}\big)^* g_F \label{metric-glob-hyp}
\end{eqnarray}
has full holonomy
\[ \Hol_{(0,0,p)}(M,g^C) = C^{-1} \circ \Hol_p(N,g_N) \circ C) \ltimes
\R^n. \] Moreover, if $(N,g_N)$ is complete, then the Lorentzian
manifold $(M,g^C)$ is globally hyperbolic and the space-like slices
$\,( \{t\} \times F, \,g^C_t = (C+2(t+a)\mathrm{Id}_{TF})^*g_F )$
are complete Cauchy surfaces.
\end{satz}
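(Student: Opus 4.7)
The plan is to organise the proof around the identification of a parallel null vector field on $(M, g^C)$, which constrains the holonomy to the parabolic stabiliser of a null line, and then to pin down the screen-bundle holonomy via the structural role of the Codazzi tensor $C$. The construction is tailored so that a parallel spinor on $(N, g_N)$ lifts, through an imaginary Killing spinor on the warped product $(F, g_F) = (\R\times N,\,ds^2 + e^{-4s}g_N)$ via the correspondence of \cite{baer-gauduchon-moroianu05}, to a parallel null spinor on $(M, g^C)$; the associated Dirac current is then a parallel null vector field $V$.

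Writing $g^C = -dt^2 + g_t$ with $g_t := B_t^* g_F$ and $B_t := C + 2(t+a)\mathrm{Id}_{TF}$, the first technical step would be to verify directly that $V$ is parallel. This reduces to a Gauss--Codazzi computation for the spacelike foliation $\{t\}\times F$: the second fundamental form is essentially $B_t^{-1}$, the evolution of the induced metric is $\partial_t g_t = 2\,g_t(B_t^{-1}\cdot,\cdot)$, and the Codazzi hypothesis on $C$ with respect to $g_F$ is exactly the compatibility condition needed for the Codazzi equation of this family of slices to hold. Once $V$ is parallel, $\Hol_{(0,0,p)}(M, g^C) \subset \Stab_{\SO^0(1,n+1)}(V) \cong \SO(n+1) \ltimes \R^{n+1}$, and by Proposition~\ref{types} the full holonomy reduces further to an $\O(n)\ltimes\R^n$-type subgroup after splitting $V^\perp = \R V \oplus \cS_p$.

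To pin down the holonomy exactly, I would compute the holonomy of the screen bundle $\cS = V^\perp/\R V$ with its induced connection $\nabla^\cS$. The slice $\{0\}\times F$ carries the Riemannian metric $g_0 = B_0^* g_F$, and standard Codazzi-deformation theory shows that the Levi-Civita connection of $B_0^* g_F$ is conjugate via $B_0$ to that of $g_F$, so $\Hol_p(F, B_0^* g_F) = B_0^{-1}\,\Hol_p(F, g_F)\,B_0$. The warped-product structure of $g_F$ together with the parallel-spinor condition on $N$ identifies $\Hol_p(F, g_F)$ with $\Hol_p(N, g_N)$ sitting inside $\SO(T_pN) \subset \SO(T_pF)$; since the additive term $2a\,\mathrm{Id}$ in $B_0$ commutes with any $\O(n)$-action, the effective conjugation is by $C$ itself, yielding the $\O(n)$-part of the formula. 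Ambrose--Singer applied to the $t$-dependent curvature terms then shows that the full $\R^n$ translation part is present and uncoupled (Type~2 in the sense of Proposition~\ref{types}), giving $\Hol_{(0,0,p)}(M,g^C) = (C^{-1}\,\Hol_p(N, g_N)\,C)\ltimes \R^n$.

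For global hyperbolicity, assume $(N, g_N)$ is complete; then $(F, g_F)$ is complete. For each $t\in(-a,\infty)$ the eigenvalues of $B_t$ are bounded below by $2(t+a) > 0$, so $B_t$ is a bi-Lipschitz self-diffeomorphism of $F$ and $g_t^C = B_t^* g_F$ is a complete Riemannian metric. Since $g^C = -dt^2 + g_t^C$ has no cross terms, $\partial_t$ is a unit timelike geodesic vector field orthogonal to every slice, and a standard argument bounding the $t$-variation of inextendible causal curves on finite parameter intervals shows that each $\{t\}\times F$ is an achronal complete spacelike Cauchy hypersurface. The main obstacle I expect is Step~1: disentangling how the Codazzi condition on $C$ simultaneously produces the parallelism of $V$ and the conjugation by $C$ in the holonomy formula; these two occurrences of $C$ are two manifestations of the same structural compatibility and must be tracked carefully through the Gauss--Codazzi equations for the foliation.
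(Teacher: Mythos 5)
The paper does not reprove the cylinder construction: its proof consists of two sentences, quoting Theorem~3 of \cite{baum-mueller08} for the restricted holonomy group and then invoking Proposition~\ref{Satz-full-hol} to pass to the full holonomy. You instead attempt to rederive the Baum--M\"uller result itself, and your Step~2 contains genuine errors. The screen bundle $\cS=V^{\perp}/\R V$ is $n$-dimensional, while the slice $\{0\}\times F$ is $(n+1)$-dimensional; since the parallel null vector $V$ necessarily has a nonzero $\partial_t$-component, it is not tangent to the slices, so $\cS$ is \emph{not} the tangent bundle of $(\{0\}\times F,\,B_0^*g_F)$, and computing $\Hol_p(F,B_0^*g_F)$ does not give the screen holonomy. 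Worse, the assertion that the warped-product structure identifies $\Hol_p(F,g_F)$ with $\Hol_p(N,g_N)$ inside $\SO(T_pN)$ is false: for $N=\R^n$ flat (which has parallel spinors) the warped product $ds^2+e^{-4s}g_N$ is hyperbolic space of curvature $-4$, whose holonomy is all of $\SO(n+1)$, while $\Hol(N)$ is trivial. The group $\Hol_p(N,g_N)$ enters the answer not through the holonomy of the leaf $F$ but through the parallel transport of the distribution $TN\subset TF$ (equivalently of its image under $B_t$, which is what actually realises the screen); this identification is precisely the content of the cited theorem and cannot be bypassed by a slice-holonomy computation.

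Two further points. The passage from conjugation by $B_0=C+2a\,\mathrm{Id}_{TF}$ to conjugation by $C$ ``because the additive term commutes with any $\O(n)$-action'' is not valid: $(C+2a)^{-1}h(C+2a)\neq C^{-1}hC$ unless $h$ commutes with $C$, so the appearance of $C$ rather than $B_0$ in the final formula needs the actual mechanism of \cite{baum-mueller08}, not this shortcut. Also, $\Stab_{\SO^0(1,n+1)}(V)\cong\SO(n)\ltimes\R^n$, not $\SO(n+1)\ltimes\R^{n+1}$. Your outline of the completeness and global hyperbolicity part (the pointwise bound $g^C_t\ge 4(t+a)^2\,g_F$ forcing completeness of the slices, plus the standard Cauchy-surface argument for a metric of the form $-dt^2+g_t$) is essentially sound. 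If you want a self-contained argument rather than the paper's citation, the missing ingredient is the explicit identification of the parallel null vector field and of the parallel screen distribution in terms of $B_t$ and $TN$, from which the $C$-conjugated $\O(n)$-part and the $\R^n$-part follow by direct parallel-transport and curvature computations; as written, your Step~2 computes the holonomy of the wrong bundle.
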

\begin{proof} The proof in \cite{baum-mueller08}, Theorem 3, states the
result for the reduced holonomy groups. Using Proposition
\ref{Satz-full-hol} in addition, we obtain the result for the full
holonomy group. \end{proof}
Explicit examples for Codazzi tensors $C$ on the warped product
$(\R\times N, ds^2 + e^{-4s}g_N)$ are given in
\cite{baum-mueller08}. Take for example a bounded, strictly
increasing function $f\in C^{\infty}(\R)$ with $f(0)=0$ and
$f(s)<\lambda$ for all $s\in \R$. Then $C^f: \R\del_s \oplus TN \to
\R\del_s \oplus TN$ given by
\[ C^f := \left(%
\begin{array}{cc}
  e^{2s}f'(s) & 0 \\
  0 & 2e^{2s}(\lambda - f(s)) \mathrm{Id}_{TN} \\
\end{array}%
\right) \] is a Codazzi tensor on $(F,g_F)$ and the metric
(\ref{metric-glob-hyp}) is given by
\[ g^{C^f} = -dt^2 + \big(e^{2s} f'(s) + 2a +2t \big)^2 ds^2 + 4 \big(
e^{-2s}t + e^{-2s}a + \lambda -f(s) \big)^2 g_N .\]

These two constructions reduce the problem of finding, for each $G$
in the table in Theorem~\ref{theo1}, a Lorentzian manifold with
holonomy $G\ltimes \R^n$ to the Riemannian case. First, one has to ensure
the existence of Riemannian manifolds with holonomy group $G$.
Then, for geodesically complete or globally hyperbolic Lorentzian
metrics, one needs complete Riemannian manifolds with holonomy group
$G$. For connected holonomy groups we can built on the deep
existence results for complete and even compact Riemannian manifolds
with special holonomy obtained by several authors (for an overview
see  \cite{joyce07}). Based on the examples with
connected holonomy groups, Moroianu and Semmelmann in
\cite{moroianu-semmelmann00} constructed Riemannian
manifolds with parallel spinor for each of the non-connected groups
$G$ in the table in Theorem \ref{theo2}. For $\SU(m)\rtimes \Z_2$
they construct a compact manifold, and for the remaining groups the
metrics are obtained by removing points from compact spaces or by
cone constructions, thus these metrics are not complete. This yields
the following conclusion.

\bfolg For each of the groups $G$ in Theorem \ref{theo2} there exist
Lorentzian manifolds with holonomy $G\ltimes \rrn$ and parallel
spinors. Moreover, for the connected groups $G$ and for
$\SU(m)\rtimes \Z_2$, there exist geodesically complete as well as
globally hyperbolic Lorentzian manifolds with complete spacelike
Cauchy surfaces and holonomy $G\ltimes \rrn$. \efolg
It would be interesting to know, if the groups $Sp(m)\times \Z_d$,
and $Sp(m) \cdot \Gamma$ in Theorem \ref{wangtheo} can be realized as
holonomy group of a {\em complete} Riemannian manifold.

%
%
%
%
%
%

%

%
%

%


\def\cprime{$'$}

\end{document}